\providecommand{\U}[1]{\protect\rule{.1in}{.1in}}
\newtheorem{theorem}{Theorem}
\newtheorem{definition}[theorem]{Definition}
\newtheorem{lemma}[theorem]{Lemma}
\newtheorem{remark}[theorem]{Remark}
\newenvironment{proof}[1][Proof]{\noindent\textbf{#1.} }{\ \rule{0.5em}{0.5em}}
\numberwithin{equation}{section}
\begin{document}

\title{A variational principle for nonpotential perturbations of gradient flows of
nonconvex energies \thanks{\textbf{Acknowledgment.}\quad\textrm{This work has
been supported by the Austrian Science Fund (FWF) project P27052-N25.}}}
\author{{Stefano Melchionna\thanks{ Faculty of Mathematics, University of Vienna,
Oskar-Morgenstern-Platz 1, 1090 Wien, Austria. $\qquad$ E-mail:
\textit{stefano.melchionna@univie.ac.at}} }}
\maketitle

\begin{abstract}
We investigate a variational approach to nonpotential perturbations of
gradient flows of nonconvex energies in Hilbert spaces. We prove existence of
solutions to elliptic-in-time regularizations of gradient flows by combining
the minimization of a parameter-dependent functional over entire trajectories
and a fixed-point argument. These regularized solutions converge up to
subsequence to solutions of the gradient flow as the regularization parameter
goes to zero. Applications of the abstract theory to nonlinear
reaction-diffusion systems are presented.

\end{abstract}

\section{Introduction}

This work is concerned with a \textit{nonpotential perturbation} of a gradient
flow driven by a possibly nonconvex energy $\phi:H\rightarrow(-\infty
,+\infty]$, namely%
\begin{align}
u^{\prime}+\mathrm{D}\phi(u) &  \ni f(u)\qquad\text{a.e. in }%
(0,T),\label{grdaient flow}\\
u(0) &  =u_{0}\text{.}\label{grdaient flow ic}%
\end{align}
Here $H$ is a real Hilbert space, $u^{\prime}$ denotes the time derivative of
$u$, and we assume that the energy $\phi$ can be decomposed as
\[
\phi=\varphi_{1}-\varphi_{2},\quad\varphi_{1},\varphi_{2}:H\rightarrow
(-\infty,+\infty]\text{,}%
\]
where $\varphi_{1},\varphi_{2}$ are proper, bounded from below, and lower
semicontinuous (l.s.c.) functionals. The symbol $\mathrm{D}\phi$ represents
some suitably-defined gradient of the functional $\phi$ (see below),
$f:H\rightarrow H$ is an (at most) linearly growing and continuous function,
and $u_{0}\in D(\phi):=\{u\in H:\phi(u)<+\infty\}$. Note that we are not
assuming here $f(u)=\mathrm{D}F(u)$ for some $F:H\rightarrow%
\mathbb{R}
$. In particular, the perturbation term $f$ is \textit{nonpotential.
}Including a nonpotential term allows us to apply our theory to systems of
differential equations, see Section \ref{applications}.

Gradient flows arise ubiquitously in connection with dissipative evolution and
correspond to problem (\ref{grdaient flow}) for $f=0$, namely,
\begin{align}
u^{\prime}+\mathrm{D}\phi(u)  &  \ni0 \qquad\text{a.e. in }%
(0,T),\label{np grad flow}\\
u(0)  &  =u_{0}\text{.}%
\end{align}
Together with its nonpotential perturbation (\ref{grdaient flow}), the latter
describes a variety of dissipative evolution situations and it is therefore
crucially relevant in applications.

A recent variational approach to dissipative problems is the so-called
\textit{weighted energy-dissipation }(WED) procedure. This consists in
defining an energy-dissipation functional $I_{\varepsilon}$ over entire
trajectories which depends on a parameter $\varepsilon$ and prove that its
minimizers converge to solutions to the target problem for $\varepsilon
\rightarrow0$. Such a global-in-time variational approach to dissipative
problems is interesting, since it paves the way to the application of tools
and techniques of the calculus of variation (e.g. Direct Method, relaxation,
$\Gamma$-convergence). Moreover, the WED procedure brings also a new tool to
check qualitative properties of solutions and comparison principles for
dissipative problems. A detailed discussion of this application will appear in
a forthcoming paper. In addition, the minimization problem features,
typically, more regular solutions. This is indeed the case here, as the
Euler-Lagrange system associated with the minimization of the WED functional
corresponds to an elliptic-in-time regularization of the gradient flow
problem. The elliptic-regularization approach to evolution equations has to be
traced back at least to \cite{Li} and \cite{Ol}, see also \cite{Li-Ma}. A
first occurrence of the WED functional approach is in \cite{Il} and \cite{Hi}.
Later, the WED formalism has been reconsidered by Mielke and Ortiz
\cite{Mi-Or} for rate-independent equations. The gradient flow case with
$\lambda$-convex potentials has been studied by Mielke and Stefanelli
\cite{Mi-St}. The extension to the genuinely nonconvex energy case is due to
Akagi and Stefanelli \cite{Ak-St}. Finally, \cite{Ak-St2} and \cite{Ak-St3}
are concerned with the WED functional for doubly nonlinear problems.

This note extends the WED variational approach to the nonpotential case. In
particular, the results from \cite{Mi-St} and \cite{Ak-St} will be recovered.
In addition, our new technique will allow the application of the method to
systems of gradient flows, e.g., of reaction diffusion equations coupled via
the reaction terms. We remark that existence results for the Cauchy problem
(\ref{grdaient flow})-(\ref{grdaient flow ic}) have already been proved in
\cite{At-Da,Ot}. The main result of this work is that solutions to
(\ref{grdaient flow})-(\ref{grdaient flow ic}) can be obtained as limits of
solutions to an elliptic regularization of (\ref{grdaient flow}%
)-(\ref{grdaient flow ic}), which is tackled by combining a fixed-point
argument and a variational technique. Having already observed that
$f(u)\neq\mathrm{D}F(u)$ for all $F:H\rightarrow%
\mathbb{R}
$, we intend to use here a variational technique in order to solve a problem
which has no variational nature. We do this by combining the WED approach with
a fixed-point argument. More precisely, we define an operator $S:L^{2}%
(0,T;H)\rightarrow L^{2}(0,T;H)$ through the minimization of the WED-type
functional by letting%
\begin{align}
S &  :v\mapsto u=\arg\min_{w}I_{\varepsilon,v}(w),\nonumber\\
I_{\varepsilon,v}(w) &  =\int_{0}^{T}\exp(-t/\varepsilon)\left(
\frac{\varepsilon}{2}|w^{\prime}|^{2}+\phi(w)-(f(v),w)\right)  \mathrm{d}%
t,\nonumber
\end{align}
and we check that $S$ has a fixed point which satisfies an elliptic-in-time
regularization of equation (\ref{np grad flow}) (cf. Theorem \ref{main teo}
and Theorem \ref{full gen th}):%
\begin{align}
-\varepsilon u_{\varepsilon}^{\prime\prime}+u_{\varepsilon}^{\prime}%
+\partial\phi(u_{\varepsilon}) &  \ni f(u_{\varepsilon})\text{ \ \ \ \ \ a.e.
in }(0,T),\label{eu la}\\
u_{\varepsilon}(0) &  =u_{0}\text{, \ \ \ }u_{\varepsilon}^{\prime
}(T)=0\text{.}\label{eu la 2}%
\end{align}
Then, by passing to the limit $\varepsilon\rightarrow0$ we recover a solution
to equation (\ref{grdaient flow}).

In Section \ref{assumptions} we enlist the assumptions which are assumed
throughout the paper and we state our main results. We first prove our results
in the simpler case of a convex potential $\phi$ in Section
\ref{lambda convex energy}. In Section \ref{nonconvex energy} we prove the
results in full generality, namely we deal with the case of nonconvex energies
$\phi$. Section \ref{generalization} illustrates how to generalize our results
to the case of less regular initial data. Finally, we present applications of
our abstract theory to reaction-diffusion systems in Section
\ref{applications}.

\section{Assumptions and main results\label{assumptions}}

We enlist here the assumptions which are considered throughout the paper. Let
$H$ be a real Hilbert space with scalar product $(\cdot,\cdot)$ and norm
$|\cdot|$. Let the function $f:H\rightarrow H$ be continuous and sublinear,
namely
\begin{equation}
|f(u)|\leq C_{1}(1+|u|)\label{linear growth f}%
\end{equation}
for all $u\in H$ and some positive constant $C_{1}$. We assume that the
functional $\phi$ can be decomposed as $\phi=\varphi_{1}-\varphi_{2}$, where
$\varphi_{1},\varphi_{2}:H\rightarrow\lbrack0,+\infty]$ are bounded from
below, proper, l.s.c., and convex functionals. Furthermore, we assume
$D(\varphi_{1})\subset D(\varphi_{2})$, $D(\partial\varphi_{1})\subset
D(\partial\varphi_{2})$, and that there exist constants $k_{1},k_{2}\in
\lbrack0,1),$ $C_{2}>0$, and a non-decreasing function $\ell:%
\mathbb{R}
\rightarrow\lbrack0,+\infty)$ such that
\begin{equation}
\varphi_{2}(u)\leq k_{1}\varphi_{1}(u)+C_{2}\label{controll 1}%
\end{equation}
for all $u\in D(\varphi_{1})$ and
\begin{equation}
|\xi|^{2}\leq k_{2}|\left(  \partial\varphi_{1}(u)\right)  ^{\circ}|^{2}%
+\ell(|u|)(\varphi_{1}(u){+}1)\label{controll 2}%
\end{equation}
for all $u\in D(\partial\varphi_{2})$ and $\xi\in\partial\varphi_{2}(u)$. Here
$\partial\varphi_{1},\partial\varphi_{2}$ denote the subdifferentials of
$\varphi_{1}$ and $\varphi_{2}$ respectively and $(\partial\varphi
_{1}(u))^{\circ}$ the element of $\partial\varphi_{1}(u)$ with minimal norm.
Moreover, let $\left(  X,|\cdot|_{X}\right)  $ be a Banach space compactly
embedded in $H$ such that
\begin{equation}
\varphi_{1}(u)\geq c_{X}|u|_{X}^{2}-C_{3}\label{compact assumption}%
\end{equation}
for all $u\in D(\phi)$ and some strictly positive constants $c_{X}$ and
$C_{3}$.

\begin{remark}
We remark that these assumptions are standard and general enough to include a
variety of different problems (cf., e.g., \emph{\cite{Ak-St, Ot, Ot2}} and Section
\emph{\ref{applications}}).
\end{remark}

We are interested in problem%
\begin{align}
u^{\prime}+\partial\varphi_{1}\left(  u\right)  -\partial\varphi_{2}\left(
u\right)   &  \ni f(u) \qquad\text{a.e. in }(0,T),\label{nonconv gf-1}\\
u(0)  &  =u_{0}. \label{nonconv gf}%
\end{align}
Strong solutions to problem (\ref{nonconv gf-1})-(\ref{nonconv gf}) are
defined as follows.

\begin{definition}
[Strong solution]\label{defi} Let the above assumptions be satisfied and
$u_{0}\in D(\varphi_{1})$. Then, $u\in H^{1}(0,T;H)$ is a \emph{strong
solution} to \emph{(\ref{nonconv gf-1})-(\ref{nonconv gf})} if $u(t)\in
D\left(  \partial\varphi_{1}\right)  $ for a.e. $t\in(0,T)$ and it satisfies
\begin{align}
u^{\prime}+\xi &  =f(u)+\eta\text{ \ \ \ a.e. in }%
(0,T),\label{non convex perturbed gf}\\
\xi &  \in\partial\varphi_{1}(u)\text{ \ \ \ a.e. in }(0,T),\\
\eta &  \in\partial\varphi_{2}(u)\text{ \ \ \ a.e. in }(0,T),\\
u(0)  &  =u_{0}\text{,}%
\end{align}
for given $\xi,\eta\in L^{2}(0,T;H)$.
\end{definition}

The main result of this work is the following theorem whose proof is detailed
in Section \ref{nonconvex energy}.

\begin{theorem}
[Elliptic regularization]\label{full gen th}Let $u_{0}\in D(\partial
\varphi_{1})$. Then, the regularized problem%
\begin{align}
-\varepsilon u_{\varepsilon}^{\prime\prime}+u_{\varepsilon}^{\prime}%
+\xi_{\varepsilon} &  =f(u_{\varepsilon})+\eta_{\varepsilon}\text{ \ \ \ a.e.
in }(0,T),\label{euler eq 2}\\
\xi_{\varepsilon} &  \in\partial\varphi_{1}(u_{\varepsilon})\text{ \ \ \ a.e.
in }(0,T),\\
\eta_{\varepsilon} &  \in\partial\varphi_{2}(u_{\varepsilon})\text{ \ \ \ a.e.
in }(0,T),\label{euler eq 2.2}\\
u_{\varepsilon}(0) &  =u_{0}\text{, \ \ \ \ \ }u_{\varepsilon}^{\prime
}(T)=0\label{euler eq 2 ic}%
\end{align}
admits (at least) a solution $u_{\varepsilon}\in H^{2}(0,T;H)$ for
$\varepsilon>0$ small enough. Furthermore, there exist a sequence
$\varepsilon_{n}\rightarrow0$ such that $u_{\varepsilon_{n}}\rightarrow u$
weakly in $H^{1}(0,T;H)$ and strongly in $C\left(  [0,T];H\right)  $ and $u$
is a strong solution of \emph{(\ref{nonconv gf-1})-(\ref{nonconv gf})}.
\end{theorem}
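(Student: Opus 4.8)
The plan is to prove the statement in two stages: first, for each fixed small $\varepsilon>0$, construct a solution of the elliptic regularization \eqref{euler eq 2}--\eqref{euler eq 2 ic} by combining the WED minimization with a fixed-point argument; second, pass to the limit $\varepsilon\to0$. For the first stage I would fix $v\in L^{2}(0,T;H)$ and minimize $I_{\varepsilon,v}$ over the affine set $\mathcal{K}=\{w\in H^{1}(0,T;H):w(0)=u_{0}\}$ by the Direct Method. Coercivity in $H^{1}(0,T;H)$ comes from the term $\tfrac{\varepsilon}{2}|w'|^{2}$ together with the lower bound $\phi=\varphi_{1}-\varphi_{2}\ge(1-k_{1})\varphi_{1}-C_{2}$ granted by \eqref{controll 1}, while the linear term $-(f(v),w)$ is absorbed by Young's inequality since $v$ is frozen. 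The only nonstandard point in the lower semicontinuity is the concave contribution $-\varphi_{2}$: along a minimizing sequence $w_{n}\rightharpoonup w$ in $H^{1}(0,T;H)$, the bound on $\varphi_{1}(w_{n})$ and \eqref{compact assumption} give, via Aubin--Lions, strong convergence in $C([0,T];H)$, which together with \eqref{controll 1} lets me pass to the limit in $\int_0^T e^{-t/\varepsilon}\varphi_{2}(w_{n})\,dt$ and conclude weak lower semicontinuity of $I_{\varepsilon,v}$. This produces a minimizer $u=:S(v)$.

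Next I would compute the first variation of $I_{\varepsilon,v}$ at the minimizer. Testing with smooth $h$ vanishing at $t=0$ and integrating by parts the quadratic term against the weight $e^{-t/\varepsilon}$ produces, after dividing by the strictly positive weight, the inclusion $-\varepsilon u''+u'+\partial\varphi_{1}(u)-\partial\varphi_{2}(u)\ni f(v)$ together with the natural boundary condition $u'(T)=0$ coming from the boundary term at $t=T$. Writing $\xi\in\partial\varphi_{1}(u)$, $\eta\in\partial\varphi_{2}(u)$ and using \eqref{controll 2} to bound $\eta$ in $L^{2}(0,T;H)$ by $(\partial\varphi_{1}(u))^{\circ}$ and $\varphi_{1}(u)$ (finite since $u_{0}\in D(\partial\varphi_{1})$), the equation gives $u''\in L^{2}(0,T;H)$, so $u\in H^{2}(0,T;H)$. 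I would then record the properties of $S$: a priori bounds showing $S$ maps a suitable closed bounded convex subset of $L^{2}(0,T;H)$ into itself (using the sublinearity \eqref{linear growth f}), compactness of $S$ (its image is bounded in $H^{1}(0,T;H)\cap L^{2}(0,T;X)$, hence relatively compact in $L^{2}(0,T;H)$ by \eqref{compact assumption} and Aubin--Lions), and continuity of $S$. Schauder's theorem (or a Leray--Schauder variant via an a priori bound for $u=\lambda S(u)$) then yields a fixed point $u_{\varepsilon}=S(u_{\varepsilon})$, which is precisely a solution of \eqref{euler eq 2}--\eqref{euler eq 2 ic} in $H^{2}(0,T;H)$, proving the first assertion. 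The genuinely delicate point here is the continuity of $S$: since $\phi$ is nonconvex the minimizer need not be unique, so I would either argue uniqueness for $\varepsilon$ small or, more robustly, pass through a stability/selection argument for minimizers in the spirit of the convex analysis of Section \ref{lambda convex energy} and \cite{Ak-St}.

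For the second stage I would derive estimates on $u_{\varepsilon}$ uniform in $\varepsilon$. Testing \eqref{euler eq 2} with $u_{\varepsilon}'$ and using $u_{\varepsilon}'(T)=0$, the elliptic term contributes with a favorable sign, $\int_{0}^{T}(-\varepsilon u_{\varepsilon}'')\cdot u_{\varepsilon}'\,dt=\tfrac{\varepsilon}{2}|u_{\varepsilon}'(0)|^{2}\ge0$, while the chain rule turns $\int_0^T(\xi_{\varepsilon}-\eta_{\varepsilon})\cdot u_{\varepsilon}'\,dt$ into the increment of $\phi(u_{\varepsilon})$; combined with \eqref{controll 1} and \eqref{linear growth f} this bounds $u_{\varepsilon}'$ in $L^{2}(0,T;H)$ and $\varphi_{1}(u_{\varepsilon})$ uniformly, hence $u_{\varepsilon}$ in $L^{\infty}(0,T;X)$ through \eqref{compact assumption}. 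A second, more careful testing yields $\varepsilon\int_{0}^{T}|u_{\varepsilon}''|^{2}\,dt\le C$, so that $\varepsilon u_{\varepsilon}''\to0$ in $L^{2}(0,T;H)$, together with uniform $L^{2}$-bounds on $\xi_{\varepsilon}$ and $\eta_{\varepsilon}$ via \eqref{controll 2}.

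Finally I would pass to the limit along a subsequence $\varepsilon_{n}\to0$. The uniform bounds give $u_{\varepsilon_{n}}\rightharpoonup u$ weakly in $H^{1}(0,T;H)$ and, by \eqref{compact assumption} and Aubin--Lions, strongly in $C([0,T];H)$, whence $u(0)=u_{0}$; moreover $\xi_{\varepsilon_{n}}\rightharpoonup\xi$, $\eta_{\varepsilon_{n}}\rightharpoonup\eta$ weakly in $L^{2}(0,T;H)$ and $f(u_{\varepsilon_{n}})\to f(u)$ strongly. Since $\varepsilon_{n}u_{\varepsilon_{n}}''\to0$, the equation passes to the limit as $u'+\xi-\eta=f(u)$, and the strong--weak closedness of the maximal monotone graphs of $\partial\varphi_{1}$ and $\partial\varphi_{2}$ (using the strong convergence of $u_{\varepsilon_{n}}$ and a $\limsup$ inequality on $\int_0^T(\xi_{\varepsilon_{n}},u_{\varepsilon_{n}})\,dt$) identifies $\xi\in\partial\varphi_{1}(u)$ and $\eta\in\partial\varphi_{2}(u)$, so $u$ is a strong solution in the sense of Definition \ref{defi}. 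I expect the main obstacles to be, in the first stage, securing the continuity of the nonconvex solution operator $S$ in the presence of possibly non-unique minimizers, and, in the second stage, the $\varepsilon$-uniform control of $\xi_{\varepsilon}$ through \eqref{controll 2} together with the proof that $\varepsilon u_{\varepsilon}''$ vanishes, since these are exactly the estimates that make the identification of the limit possible.
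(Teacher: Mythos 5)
Your two-stage architecture (fixed point at fixed $\varepsilon$, then causal limit) matches the paper, and your stage-two estimates and limit passage — testing with $u_{\varepsilon}'$, maximal regularity, Aubin--Lions, demiclosedness of the maximal monotone graphs — are essentially the paper's. But stage one, where you minimize the \emph{nonconvex} functional $I_{\varepsilon,v}$ with $\phi=\varphi_{1}-\varphi_{2}$ directly, contains a genuine gap, and it is exactly the one you flag at the end: continuity (indeed single-valuedness) of $S$. For nonconvex $\phi$ no smallness of $\varepsilon$ restores convexity of $I_{\varepsilon,v}$ — the Mielke--Stefanelli uniform-convexity argument needs $\lambda$-convexity of $\phi$, while here $\varphi_{2}$ is an arbitrary convex l.s.c. functional, and the paper explicitly remarks that solutions of both \eqref{euler eq 2}--\eqref{euler eq 2 ic} and the limit problem may be nonunique. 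So $S$ is set-valued; Schaefer's Theorem \ref{schaefer} needs a single-valued continuous compact map, a Kakutani-type theorem needs convex values (which the minimizer set of a nonconvex functional does not have), and a continuous selection of minimizers does not exist in general. Neither of your proposed fixes (uniqueness for small $\varepsilon$; an unspecified ``stability/selection argument'') closes this. Two further soft spots in the same stage: (i) weak lower semicontinuity of $I_{\varepsilon,v}$ requires $\limsup_{n}\varphi_{2}(w_{n})\leq\varphi_{2}(w)$ along the minimizing sequence, i.e.\ \emph{upper} semicontinuity of $\varphi_{2}$, which does not follow from strong $C([0,T];H)$ convergence plus the domination \eqref{controll 1}; the bound \eqref{controll 2} cannot help because a minimizing sequence carries no control on $|(\partial\varphi_{1}(w_{n}))^{\circ}|$; (ii) even granted a minimizer, the first variation of a difference of convex functionals does not split, $\partial(\varphi_{1}-\varphi_{2})\neq\partial\varphi_{1}-\partial\varphi_{2}$ in general, so the inclusion with separate selections $\xi\in\partial\varphi_{1}(u)$, $\eta\in\partial\varphi_{2}(u)$ does not follow from ``testing with smooth $h$''.

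The paper's proof is engineered precisely to avoid all three problems, by one move missing from your proposal: a \emph{second} approximation parameter. It replaces $\varphi_{2}$ by its Moreau--Yosida regularization $\varphi_{2}^{\lambda}$ and moves $\mathrm{D}\varphi_{2}^{\lambda}$, which is single-valued and Lipschitz (hence continuous and sublinear for fixed $\lambda$), into the nonpotential perturbation: $g=f+\mathrm{D}\varphi_{2}^{\lambda}$. Then Theorem \ref{main teo} applies verbatim with the \emph{convex} energy $\varphi_{1}$: there the WED functional is uniformly convex for small $\varepsilon$, $S$ is single-valued and continuous, Schaefer applies, and the Euler--Lagrange inclusion comes with the subgradients already split, since $\mathrm{D}\varphi_{2}^{\lambda}(u_{\varepsilon,\lambda})=\partial\varphi_{2}(J_{\lambda}u_{\varepsilon,\lambda})$ by \eqref{resolvent}. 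Estimates uniform in $\lambda$ — via \eqref{controll 1}, \eqref{comparison yosida}, \eqref{norm yosida}, \eqref{controll 2} with the $k_{2}<1$ absorption inside the maximal regularity estimate — then let one pass $\lambda\to0$ at fixed $\varepsilon$, identifying $\eta_{\varepsilon}\in\partial\varphi_{2}(u_{\varepsilon})$ by the standard monotonicity argument, which produces the solution of \eqref{euler eq 2}--\eqref{euler eq 2 ic}; only afterwards does one take $\varepsilon\to0$ as you describe. Without this $\lambda$-layer (or some equally effective replacement), your fixed-point step does not go through.
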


Theorem \ref{full gen th} extends the former analysis from \cite{Ak-St} and
from \cite{Mi-St}, as our theory applies to the nonpotential perturbations.
And it is worth mentioning that solutions to both problem (\ref{nonconv gf-1}%
)-(\ref{nonconv gf}) and problem (\ref{euler eq 2})-(\ref{euler eq 2 ic})
might be nonunique. Even in the case $f=0$, we provide an alternative proof of
the results in \cite{Ak-St}. Note however that assumption
(\ref{compact assumption}) is not required in \cite{Mi-St} and it is replaced
by a weaker one in \cite{Ak-St}, namely the exponent $2$ in
(\ref{compact assumption}) is replaced by $p\geq1$. On the other hand,
(\ref{compact assumption}) is necessary in order to apply the Gronwall Lemma
\ref{gronwall} which is one of the main technical tool of this work.
Additionally, we can prove similar results also in the case $u_{0}\in
D(\varphi_{1})$ (cf. Section \ref{generalization}). More precisely, we
approximate $u_{0}\in D(\varphi_{1})$ by a sequence $u_{0\varepsilon}\in
D(\partial\varphi_{1})$, we solve equation (\ref{euler eq 2}%
)-(\ref{euler eq 2.2}) coupled with $u_{\varepsilon}(0)=u_{0\varepsilon}$ and
$u_{\varepsilon}^{\prime}(T)=0$ for all $\varepsilon$ small enough, and we
pass to the limit $\varepsilon\rightarrow0$.

\section{Convex energy\label{lambda convex energy}}

Before moving to the proof of the main result in full generality, let us
present the argument in the simpler case of convex energy. In particular,
throughout this section we assume $\phi$ to be convex, namely $\varphi_{2}=0$
(i.e., $\phi=\varphi_{1}$). Problem (\ref{nonconv gf-1})-(\ref{nonconv gf})
then reads
\begin{align}
u^{\prime}+\xi &  =f(u)\text{ \ \ \ \ a.e. in }(0,T),\label{gradient flow2}\\
\xi &  \in\partial\phi(u)\text{ \ \ \ \ a.e. in }(0,T),\\
u(0)  &  =u_{0}. \label{gradient flow2.2}%
\end{align}
As we mentioned in the introduction $f(u) \neq\partial F(u)$ for any
$F:H\rightarrow%
\mathbb{R}
$. As a consequence, system (\ref{gradient flow2})-(\ref{gradient flow2.2}),
as well as its elliptic-in-time regularization, cannot be seen as the
Euler-Lagrange system corresponding to a minimization problem. The strategy to
overcome this obstruction is to combine the WED approach with a fixed-point procedure.

Let us consider the map $S:L^{2}(0,T;H)\rightarrow L^{2}(0,T;H)$, given by
$S:v\mapsto u$ where $u$ is the global minimizer of the functional
$I_{\varepsilon,v}$ defined by%
\begin{equation}
I_{\varepsilon,v}(u)=\int_{0}^{T}\exp(-t/\varepsilon)\left(  \frac
{\varepsilon}{2}|u^{\prime}|^{2}+\phi(u)-(f(v),u)\right)  \mathrm{{d}%
}t\label{wed funct}%
\end{equation}
over the convex set $K(u_{0}):=\{u\in H^{1}(0,T;H):u(0)=u_{0}\}$. The main
result of this section is the following.

\begin{theorem}
[Convex case]\label{main teo}Let assumption of Theorem \emph{\ref{full gen th}%
} be satisfied with $\phi=\varphi_{1}$. Then, for all $\varepsilon$ small
enough, the map $S$ has at least one fixed point $u_{\varepsilon
}=S(u_{\varepsilon})$. This satisfies the regularized system%
\begin{align}
-\varepsilon u_{\varepsilon}^{\prime\prime}+u_{\varepsilon}^{\prime}%
+\xi_{\varepsilon}-f(u_{\varepsilon})  &  =0\text{ \ \ \ a.e. in
}(0,T),\label{true euler}\\
\xi_{\varepsilon}  &  \in\partial\phi(u_{\varepsilon})\text{ \ \ \ a.e. in
}(0,T),\\
u_{\varepsilon}^{\prime}(T)  &  =0,\\
u_{\varepsilon}(0)  &  =u_{0}, \label{true euler ic}%
\end{align}
along with the solution $\xi_{\varepsilon} \in L^{2}(0,T;H)$. Moreover, the
solution(s) to the regularized system \emph{(\ref{true euler}%
)-(\ref{true euler ic})} converge(s) (up to subsequences) to (one of) the
solution(s) to the gradient flow problem \emph{(\ref{gradient flow2}%
)-(\ref{gradient flow2.2})} weakly in $H^{1}(0,T;H)$ and strongly in
$C([0,T];H)$ for $\varepsilon\rightarrow0$.
\end{theorem}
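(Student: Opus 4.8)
The plan is to split the statement into two independent parts: for each (small) $\varepsilon>0$ the map $S$ has a fixed point solving (\ref{true euler})--(\ref{true euler ic}), and these fixed points converge as $\varepsilon\to0$. First I would check that $S$ is well defined. For fixed $v\in L^2(0,T;H)$ the functional $I_{\varepsilon,v}$ is, by (\ref{compact assumption}) and the sublinearity (\ref{linear growth f}), coercive on $K(u_0)$ in the $H^1(0,T;H)$ norm: the weight $\exp(-t/\varepsilon)$ is bounded below by $\exp(-T/\varepsilon)>0$, the bound $\phi(u)\ge c'|u|^2-C_3$ coming from (\ref{compact assumption}) absorbs the linear term $-(f(v),u)$ via Young's inequality, and the kinetic term controls $u'$. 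Since $I_{\varepsilon,v}$ is moreover strictly convex and sequentially weakly l.s.c., the Direct Method yields a unique minimizer $u=S(v)$. Setting $\frac{d}{ds}I_{\varepsilon,v}(u+s\psi)|_{s=0}=0$ for admissible variations $\psi\in H^1(0,T;H)$ with $\psi(0)=0$ and integrating the kinetic term by parts, I would read off both $-\varepsilon u''+u'+\xi=f(v)$ with $\xi\in\partial\phi(u)$ and, from the free endpoint at $t=T$, the natural condition $u'(T)=0$; a regularity argument for the minimizer (using convexity of $\phi$) gives $\xi\in L^2(0,T;H)$ and hence $u\in H^2(0,T;H)$. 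A fixed point $u_\varepsilon=S(u_\varepsilon)$ then solves exactly (\ref{true euler})--(\ref{true euler ic}).

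Next I would establish the properties of $S$ needed for a topological fixed-point theorem. Testing $-\varepsilon u''+u'+\xi=f(v)$ with $u'$ and using the chain rule $(\xi,u')=\frac{d}{dt}\phi(u)$ bounds $\|u'\|_{L^2}$, while testing with $u-u_0$ (every $\varepsilon$- and $T$-boundary term carries a favourable sign thanks to $u'(T)=0$) bounds $\int_0^T\phi(u)$ and hence $\|u\|_{L^2(0,T;X)}$ by (\ref{compact assumption}). Since $X$ embeds compactly in $H$, the Aubin--Lions lemma shows that $S$ maps bounded sets into relatively compact subsets of $L^2(0,T;H)$, i.e.\ $S$ is compact; continuity follows by extracting a convergent subsequence of $S(v_n)$ when $v_n\to v$, passing to the limit using $f(v_n)\to f(v)$ in $L^2$, and invoking uniqueness of the minimizer. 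To produce a fixed point I would use the Leray--Schauder (Schaefer) theorem: for any $u=\sigma S(u)$ with $\sigma\in[0,1]$ the function $y=\frac12|S(u)|^2$ satisfies a differential inequality $-\varepsilon y''+y'\le a\,y+b$ with $y(0)=\frac12|u_0|^2$ and $y'(T)=0$, where $a,b$ are independent of $\sigma$ (here $\sigma\le1$ is used to reabsorb the sublinear term $f(\sigma S(u))$). Applying the elliptic-in-time Gronwall Lemma \ref{gronwall} bounds $\|S(u)\|_{C([0,T];H)}$, hence $\|u\|_{L^2(0,T;H)}$, so the set of such $u$ is bounded and a fixed point exists.

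For the convergence statement I would note that, taking $v=u_\varepsilon$, the same estimates together with Lemma \ref{gronwall} are uniform in $\varepsilon$: this gives $\varepsilon$-independent bounds for $u_\varepsilon$ in $H^1(0,T;H)\cap L^2(0,T;X)$, together with $\varepsilon|u_\varepsilon'(0)|^2\le C$ and $\varepsilon\|u_\varepsilon'\|_{L^2}^2\to0$. Aubin--Lions then yields a subsequence with $u_\varepsilon\rightharpoonup u$ in $H^1(0,T;H)$ and $u_\varepsilon\to u$ in $C([0,T];H)$, whence $u(0)=u_0$ and $f(u_\varepsilon)\to f(u)$ in $L^2(0,T;H)$. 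To identify the limit I would avoid bounding $\xi_\varepsilon$ and instead pass to the limit in a monotonicity inequality: testing the weak form of the equation against $\psi=u_\varepsilon-w$ for arbitrary $w\in H^1(0,T;H)$ with $w(0)=u_0$ and $\zeta\in\partial\phi(w)$ a.e., all $\varepsilon$-terms vanish and one is left with $\int_0^T(f(u)-u'-\zeta,u-w)\,dt\ge0$. A Minty argument, using that $w\mapsto\{\zeta:\zeta\in\partial\phi(w)\ \text{a.e.}\}$ is maximal monotone on $L^2(0,T;H)$, then forces $\xi:=f(u)-u'\in\partial\phi(u)$ a.e., so that $u$ solves (\ref{gradient flow2})--(\ref{gradient flow2.2}).

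The main obstacle is the a priori estimate. Because $f$ is nonpotential and only sublinear, neither the variational comparison $I_{\varepsilon,v}(u)\le I_{\varepsilon,v}(u_0)$ nor the energy identity closes by itself: the perturbation couples $\|u\|$ back into the right-hand side, and a naive estimate would need smallness of $T$ or of $C_1$ to yield an invariant ball. The device that circumvents this is the elliptic-in-time Gronwall Lemma \ref{gronwall}, which treats $\frac12|u|^2$ as a subsolution of $-\varepsilon\partial_t^2+\partial_t$ with the boundary condition $u'(T)=0$ and returns bounds that are finite and uniform in both $\sigma$ and $\varepsilon$; this is exactly where assumption (\ref{compact assumption}) enters. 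The second, more technical point is the limit identification, which I handle by monotonicity rather than by estimating $\xi_\varepsilon$, so that the only ingredients required are the strong convergence $u_\varepsilon\to u$ and the vanishing of the $\varepsilon$-terms.
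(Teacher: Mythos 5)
Your overall architecture (minimization of $I_{\varepsilon,v}$ to define $S$, Schaefer via continuity, compactness, and boundedness of the fixed-point set, then uniform estimates and the causal limit) is the same as the paper's, and most of it is sound. Two of your variants are fine with minor repairs: continuity of $S$ by compactness plus uniqueness of the minimizer is a legitimate alternative to the paper's quantitative estimate on $u_1-u_2$; and your differential inequality for $y=\frac12|S(u)|^2$ works \emph{except} that the lower bound on $(\xi,S(u))$, $\xi\in\partial\phi(S(u))$, requires $0\in D(\phi)$, which is not assumed --- either work with $\frac12|S(u)-u_0|^2$ (so that $(\xi,S(u)-u_0)\ge\phi(S(u))-\phi(u_0)\ge-\phi(u_0)$, finite since $u_0\in D(\partial\varphi_1)$), or test with $S(u)'$ and use the chain rule together with (\ref{compact assumption}), as the paper does.

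The genuine gap is in your identification of the limit. You derive the inequality $\int_0^T(f(u)-u'-\zeta,u-w)\,\mathrm{d}t\ge 0$ only for pairs $[w,\zeta]$ with $w\in H^1(0,T;H)$, $w(0)=u_0$, $\zeta\in\partial\phi(w)$ a.e.; this restriction is forced on you, because killing the $\varepsilon$-terms by integration by parts uses $w'\in L^2$ and the vanishing of the boundary term at $t=0$. But maximal monotonicity of the realization $\mathcal{A}$ of $\partial\phi$ on $L^2(0,T;H)$ lets you conclude $f(u)-u'\in\mathcal{A}(u)$ only if the inequality holds against the \emph{whole} graph of $\mathcal{A}$: Minty's trick tests with $w=(I+\mathcal{A})^{-1}(u+g)$, $g=f(u)-u'$, and since $g$ is merely $L^2$ in time this $w$ lies outside your test class. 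As stated, the step fails. To repair it you would need either a density argument showing that graph pairs with $w\in H^1$ in time suffice (not obvious: time-mollification does not preserve the nonlinear graph; one can instead run a resolvent argument inside your class, testing with $w_\mu=J_\mu(u+\mu h)$ for $h\in H^1(0,T;H)$, which stays in $H^1$ because $J_\mu$ is a contraction, and then letting $h\to f(u)-u'$ in $L^2$), or simply do what the paper does: the maximal regularity estimate (\ref{maxRegEst}), combined with the sublinearity of $f$ and the uniform $H^1$ bound, gives $\Vert\xi_\varepsilon\Vert_{L^2(0,T;H)}\le C$ uniformly in $\varepsilon$, hence $\xi_\varepsilon\rightharpoonup\xi$ weakly in $L^2(0,T;H)$ along a subsequence, and the demiclosedness (strong--weak closedness) of maximal monotone operators yields $\xi\in\partial\phi(u)$ a.e. Since you already invoke the maximal regularity theory to obtain $\xi\in L^2(0,T;H)$ and $u\in H^2(0,T;H)$ for fixed $\varepsilon$, avoiding the uniform bound on $\xi_\varepsilon$ buys you nothing; the paper's route is both shorter and actually closes the argument.
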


\subsection{Preliminary results}

In order to prove Theorem \ref{main teo}, we collect some preliminary results.

For all $v\in L^{2}(0,T;H)$ and for $\varepsilon$ small enough it is proved in
\cite{Mi-St} that there exists a unique minimizer $u \in K(u_{0})$ for the
functional $I_{\varepsilon,v}$ defined by (\ref{wed funct}). In particular,
existence is trivial for every $\varepsilon$, while the uniqueness follows
from uniform convexity for $\varepsilon$ small enough, independently of $v$.
Moreover, $u$ is one of the possibly many solutions to the regularized
problem:
\begin{align}
-\varepsilon u^{\prime\prime}+u^{\prime}+\xi-f(v)  &  =0\text{ \ \ \ \ a.e. in
}(0,T),\label{Euler equation}\\
\xi &  \in\partial\phi(u)\text{ \ \ \ \ \ a.e. in }(0,T),\\
\varepsilon u^{\prime}(T)  &  =0,\\
u(0)  &  =u_{0}.
\end{align}

Using the \textit{maximal regularity estimate}, derived in \cite[Lemma
4.1]{Mi-St}, we have
\begin{equation}
\varepsilon^{2}\left\Vert u^{\prime\prime}\right\Vert _{L^{2}(0,T;H)}%
^{2}+\left\Vert u^{\prime}\right\Vert _{L^{2}(0,T;H)}^{2}+\left\Vert
\xi\right\Vert _{L^{2}(0,T;H)}^{2}+\phi(u(T))\leq C+\left\Vert f(v)\right\Vert
_{L^{2}(0,T;H)}^{2} \label{maxRegEst}%
\end{equation}
and hence%
\begin{equation}
\left\Vert u\right\Vert _{L^{2}(0,T;H)}^{2}\leq C+C\left\Vert f(v)\right\Vert
_{L^{2}(0,T;H)}^{2}, \label{bd}%
\end{equation}
where $C$ denotes a positive constant depending on $|\left(  \partial
\varphi_{1}(u_{0})\right)  ^{\circ}|$. This ensures that the map $S$ is well-defined.

\subsection{Proof of Theorem \ref{main teo}}

The proof of the first part of the theorem follows from an application of the
Schaefer fixed-point Theorem \ref{schaefer} in the Appendix. More precisely,
we check that the map $S$ satisfies the assumptions of Theorem \ref{schaefer}
and hence we prove existence of a fixed point for $S$. In what follows the
symbol $C$ denotes a positive constant possibly depending on $T$, $u_{0}$,
$\phi$, but not on $\varepsilon$ which may vary even within the same line.

\paragraph{The map $S:L^{2}(0,T;H)\rightarrow L^{2}(0,T;H)$ is continuous.}

Let $v_{1},v_{2}\in L^{2}(0,T;H)$ be given and denote by $u_{1}$ and $u_{2}$
the unique minimizers of $I_{\varepsilon,v_{1}}$ and $I_{\varepsilon,v_{2}}$
respectively. Then, by computing the difference between the two corresponding
regularized equations, choosing $w=u_{1}-u_{2}$ as test function and
integrating over $[0,t]$ for $t\in(0,T]$, we get%
\begin{align*}
&  -\varepsilon(w^{\prime}(t),w(t))+\varepsilon\int_{0}^{t}|w^{\prime}%
|^{2}+\int_{0}^{t}(\xi_{1}-\xi_{2},w)+\frac{1}{2}|w(t)|^{2}\\
&  \leq\int_{0}^{t}(f(v_{1})-f(v_{2}),w).
\end{align*}
As $\phi$ is convex, the term $\int_{0}^{t}(\xi_{1}-\xi_{2},w)$ is
nonnegative. Hence,
\begin{align}
-\varepsilon(w^{\prime}(t),w(t))+\varepsilon\int_{0}^{t}|w^{\prime}|^{2}%
+\frac{1}{2}|w(t)|^{2} &  \leq\int_{0}^{t}(f(v_{1})-f(v_{2}),w)\nonumber\\
&  \leq\frac{1}{2}\int_{0}^{t}|f(v_{1})-f(v_{2})|^{2}+\frac{1}{2}\int_{0}%
^{t}|w|^{2}.\label{continuity}%
\end{align}
By applying the Gronwall Lemma \ref{gronwall} from Appendix, we have
\begin{align}
\frac{1}{2}|w(t)|^{2} &  \leq\varepsilon(w^{\prime}(t),w(t))+C\int_{0}%
^{t}|f(v_{1})-f(v_{2})|^{2}\nonumber\\
&  ~~~+C\int_{0}^{t}\varepsilon(w^{\prime},w)+Ct\int_{0}^{t}|f(v_{1}%
)-f(v_{2})|^{2}\nonumber\\
&  \leq C\int_{0}^{t}|f(v_{1})-f(v_{2})|^{2}+\varepsilon(w^{\prime
}(t),w(t))+\varepsilon C\int_{0}^{t}(w^{\prime},w).\label{ggg}%
\end{align}
Substituting the latter into relation (\ref{continuity}), choosing $t=T$, and
recalling that $\varepsilon w^{\prime}(T)=0$, we get%
\begin{align}
\varepsilon\int_{0}^{T}|w^{\prime}|^{2}+\frac{1}{2}|w(T)|^{2} &  \leq
C\int_{0}^{T}|f(v_{1})-f(v_{2})|^{2}\nonumber\\
&  ~~+\varepsilon C\int_{0}^{T}|(w^{\prime}(t),w(t))|+\varepsilon C\int
_{0}^{T}\int_{0}^{t}|(w^{\prime}(t),w(t))|\nonumber\\
&  \leq C\int_{0}^{T}|f(v_{1})-f(v_{2})|^{2}+\frac{\varepsilon}{2}\int_{0}%
^{T}|w^{\prime}|^{2}+\varepsilon C\int_{0}^{T}|w|^{2}.\label{bei}%
\end{align}
Integrating (\ref{continuity}) over $[0,T]$ and adding it to (\ref{bei}), we
obtain%
\begin{align*}
&  -\varepsilon\int_{0}^{T}(w^{\prime},w)+\varepsilon\int_{0}^{T}\int_{0}%
^{t}|w^{\prime}|^{2}+\frac{1}{2}\int_{0}^{T}|w|^{2}+\varepsilon\int_{0}%
^{T}|w^{\prime}|^{2}+\frac{|w(T)|^{2}}{2}\\
&  \leq C\int_{0}^{T}|f(v_{1})-f(v_{2})|^{2}+\frac{\varepsilon}{2}\int_{0}%
^{T}|w^{\prime}|^{2}+C\int_{0}^{T}|w|^{2}.
\end{align*}
By using once again estimate (\ref{ggg}), we conclude that
\begin{align*}
&  \frac{1-\varepsilon}{2}|w(T)|^{2}+\frac{1}{2}\int_{0}^{T}|w|^{2}%
+\frac{\varepsilon}{2}\int_{0}^{T}|w^{\prime}|^{2}\\
&  \leq C\int_{0}^{T}|f(v_{1})-f(v_{2})|^{2}+C\int_{0}^{T}|w|^{2}\\
&  \leq C\int_{0}^{T}|f(v_{1})-f(v_{2})|^{2}+\varepsilon C\int_{0}%
^{T}(w^{\prime}(t),w(t))\\
&  \text{ ~~}+\varepsilon C\int_{0}^{T}\int_{0}^{t}(w^{\prime}(t),w(t))\\
&  \leq C\int_{0}^{T}|f(v_{1})-f(v_{2})|^{2}+\frac{\varepsilon C}{2}%
|w(T)|^{2}\\
&  ~~+\varepsilon C\frac{1}{2}\int_{0}^{T}|w|^{2}.
\end{align*}
Thus, for $\varepsilon$ small enough, namely $\varepsilon\leq\min
\{(1+C)^{-1},(2C)^{-1}\}$, we have
\[
\frac{\varepsilon}{2}\int_{0}^{T}|w^{\prime}|^{2}+\frac{1}{4}\int_{0}%
^{T}|w|^{2}\leq C\int_{0}^{T}|f(v_{1})-f(v_{2})|^{2}.
\]

Since $f$ is continuous and linearly bounded, if $v_{1}-v_{2}\rightarrow0$ in
$L^{2}(0,T;H)$ then, $f(v_{1})-f(v_{2})\rightarrow0$ in $L^{2}(0,T;H)$ and
$w\rightarrow0$ in $H^{1}(0,T;H)$. This proves the continuity of
$S:L^{2}(0,T;H)\rightarrow H^{1}(0,T;H)$ and hence of $S:L^{2}%
(0,T;H)\rightarrow L^{2}(0,T;H)$.

\paragraph{Compactness.}

We now prove that the map $S$ is compact. Using the maximal regularity
estimate (\ref{maxRegEst}) and the linear growth of $f$, we get%

\begin{align*}
\varepsilon^{2}\left\Vert u^{\prime\prime}\right\Vert _{L^{2}(0,T;H)}%
^{2}+\left\Vert u^{\prime}\right\Vert _{L^{2}(0,T;H)}^{2}+\left\Vert
\xi\right\Vert _{L^{2}(0,T;H)}^{2}+\phi(u(T))  &  \leq C+\left\Vert
f(v)\right\Vert _{L^{2}(0,T;H)}^{2}\\
&  \leq C+C\left\Vert v\right\Vert _{L^{2}(0,T;H)}^{2}%
\end{align*}
Take now $v\in B$, where $B\subset L^{2}(0,T;H)$ is a bounded set. Then,
\[
\left\Vert u^{\prime}\right\Vert _{L^{2}\left(  0,T;H\right)  }^{2}\leq
C=C(B)
\]
and, recalling that $u(0)=u_{0}$, we have that$\ \left\Vert u\right\Vert
_{L^{2}\left(  0,T;H\right)  }^{2}\leq C.$ Testing equation
(\ref{Euler equation}), with $u^{\prime}$ and integrating first $[0,t]$ and
then on $[0,T]$, we get%
\begin{align*}
- \frac{\varepsilon}{2}\int_{0}^{T}|u^{\prime}|^{2}+\frac{\varepsilon T}{2}
|u^{\prime}(0)|^{2}+\int_{0}^{T}\int_{0}^{t}|u^{\prime}|^{2}-T\phi(u_{0}%
)+\int_{0}^{T}\phi(u)  &  \leq T\int_{0}^{T}|f(v)||u^{\prime}|\\
&  \leq C+C\left\Vert v\right\Vert _{L^{2}(0,T;H)}^{2}\text{.}%
\end{align*}
Thus, by using assumption (\ref{compact assumption}),%
\[
c_{X}\int_{0}^{T}|u|_{X}^{2}\leq C+\int_{0}^{T}\phi(u)\leq C.
\]
As $L^{2}(0,T;X)\cap H^{1}(0,T;H)$ is compactly embedded in $L^{2}(0,T;H)$, by
the Aubin-Lions Lemma \cite[Thm. 3]{Si}, the map $S$ is compact.

\paragraph{Boundedness of $A:= \{v\in L^{2}(0,T;H):v=\alpha S(v)$ for
$\alpha\in\lbrack0,1]\}.$}

In order to apply the Schaefer fixed-point Theorem \ref{schaefer} we are left
to prove that $A$ is bounded. First note that $A=\{0\}\cup\{v\in
L^{2}(0,T;H):v/\alpha=S(v)$ for $\alpha\in(0,1]\}$. Thus, $A$ is bounded if
and only if $\{v\in L^{2}(0,T;H):v/\alpha=S(v)$ for $\alpha\in(0,1]\}$ is
bounded. We now prove that $\tilde{A}:=\{u\in L^{2}(0,T;H):u=S(\alpha u)$ for
$\alpha\in(0,1]\}$ is bounded. This yields $A$ bounded.

Let $u\in\tilde{A}$. Then, there exists $\alpha\in(0,1]$ and $\xi\in
L^{2}(0,T;H)$ such that \thinspace$u$ solves
\begin{align*}
-\varepsilon u^{\prime\prime}+u^{\prime}+\xi-f(\alpha u) &  =0\text{
\ \ \ a.e. in }(0,T),\\
\xi &  \in\partial\phi(u)\text{ \ \ \ a.e. in }(0,T),\\
u^{\prime}(T) &  =0,\\
u(0) &  =u_{0}.
\end{align*}
Testing this equation with $u^{\prime}$ and integrating over $(0,t)$, we get
\begin{equation}
-\varepsilon\int_{0}^{t}(u^{\prime\prime},u^{\prime})+\int_{0}^{t}|u^{\prime
}|^{2}+\phi(u(t))-\phi(u_{0})=\int_{0}^{t}(f(\alpha u),u^{\prime
}).\label{10 bis}%
\end{equation}
Hence, recalling assumptions (\ref{compact assumption}) and
(\ref{linear growth f}), one has%
\begin{equation}
-\frac{\varepsilon}{2}|u^{\prime}(t)|^{2}+\frac{\varepsilon}{2}|u^{\prime
}(0)|^{2}+\frac{1}{2}\int_{0}^{t}|u^{\prime}|^{2}+\frac{c_{X}}{2}%
|u(t)|_{X}^{2}-\phi(u_{0})\leq C+C\alpha^{2}\int_{0}^{t}|u|^{2}\label{fff}%
\end{equation}
which, recalling that $\alpha\leq1$ and $|\cdot|\leq C|\cdot|_{X}$, yields%
\begin{equation}
\frac{1}{2}\int_{0}^{t}|u^{\prime}|^{2}+\frac{1}{2}|u(t)|^{2}\leq C+C\int
_{0}^{t}|u|^{2}+\frac{\varepsilon C}{2}|u^{\prime}(t)|^{2}.\label{cc}%
\end{equation}
Applying the Gronwall Lemma \ref{gronwall} from Appendix, we get%
\begin{align}
|u(t)|^{2} &  \leq C+\frac{\varepsilon C}{2}|u^{\prime}(t)|^{2}+C\int_{0}%
^{t}\left(  C+\frac{\varepsilon C}{2}|u^{\prime}(s)|^{2}\right)
\exp(C(t-s))\mathrm{{d}}s\nonumber\\
&  \leq C+\frac{\varepsilon C}{2}|u^{\prime}(t)|^{2}+\varepsilon C\exp
(TC)\int_{0}^{t}|u^{\prime}|^{2}\nonumber\\
&  \leq C+\frac{\varepsilon C}{2}|u^{\prime}(t)|^{2}+\varepsilon C\int_{0}%
^{t}|u^{\prime}|^{2}\text{.}\label{bb}%
\end{align}
Integrating (\ref{cc}) over $[0,T]$ and adding (\ref{cc}) to it along with the
choice $t=T$, one gets%
\begin{align*}
&  \frac{1}{2}\int_{0}^{T}\int_{0}^{t}|u^{\prime}|^{2}+\frac{1}{2}\int_{0}%
^{T}|u|^{2}+\frac{1}{2}\int_{0}^{T}|u^{\prime}|^{2}+\frac{1}{2}|u(T)|^{2}\\
&  \leq C+C\int_{0}^{T}\int_{0}^{t}|u|^{2}+\varepsilon C\int_{0}^{T}%
|u^{\prime}|^{2}+C\int_{0}^{T}|u|^{2}%
\end{align*}
and hence, thanks to estimate (\ref{bb}),%
\begin{align}
\frac{1}{2}\int_{0}^{T}|u|^{2}+\left(  \frac{1}{2}-C\varepsilon\right)
\int_{0}^{T}|u^{\prime}|^{2} &  \leq C+C(1{+}T)\int_{0}^{T}|u|^{2}\nonumber\\
&  \leq C+C\varepsilon(1{+}T)\int_{0}^{T}|u^{\prime}|^{2}\text{.}\nonumber
\end{align}
For all $\varepsilon$ small enough, we have that
\begin{equation}
\left\Vert u\right\Vert _{H^{1}(0,T;H)}\leq C\text{,}\label{final est}%
\end{equation}
where $C$ does not depend on $\varepsilon$ nor $\alpha$.

As a consequence of Theorem \ref{schaefer}, $S$ has a fixed point $u\in
L^{2}(0,T;H)$, $u=S\left(  u\right)  $. This solves the regularized equation
(\ref{true euler}) and
\[
u=\arg\min_{w\in K(u_{0})}\int_{0}^{T}\exp(-t/\varepsilon)\left(
\frac{\varepsilon}{2}|w^{\prime}|^{2}+\phi(w)-(f(u),w)\right)  \text{.}%
\]

\paragraph{The causal limit\label{causal limit}.}

The crucial issue for the WED theory is the so-called \textit{causal limit},
namely the convergence of the WED minimizers as $\varepsilon\rightarrow0$.

Let $u_{\varepsilon}$ be (one of) the solution(s) to the Euler-Lagrange
system. By testing regularized equation (\ref{true euler}) with
$u_{\varepsilon}^{\prime}$ and repeating the argument presented above with
$\alpha=1$ (cf. (\ref{10 bis}) and (\ref{final est})), we obtain that%
\[
\left\Vert u_{\varepsilon}\right\Vert _{H^{1}(0,T;H)}\leq C\text{,}%
\]
where $C$ does not depend on $\varepsilon$. Thanks to the maximal regularity
estimate (\ref{maxRegEst}) and of the sublinear growth of $f$, we have that
\begin{align}
&  \varepsilon^{2}\left\Vert u_{\varepsilon}^{\prime\prime}\right\Vert
_{L^{2}(0,T;H)}^{2}+\left\Vert u_{\varepsilon}^{\prime}\right\Vert
_{L^{2}(0,T;H)}^{2}+\left\Vert \xi_{\varepsilon}\right\Vert _{L^{2}%
(0,T;H)}^{2}+\phi(u_{\varepsilon}(T))\nonumber\\
&  \leq C+\left\Vert f(u_{\varepsilon})\right\Vert _{L^{2}(0,T;H)}%
^{2}\nonumber\\
&  \leq C\text{.}\label{final max reg}%
\end{align}
Furthermore, integrating (\ref{fff}) over $[0,T]$ (with $\alpha=1$), we deduce%
\[
\left\Vert u_{\varepsilon}\right\Vert _{L^{2}(0,T;X)}\leq C\text{.}%
\]
As a consequence of these uniform estimates and of the compact embedding
$H^{1}(0,T;H)\cap L^{2}(0,T;X)\hookrightarrow\hookrightarrow L^{2}(0,T;H)$
there exist (not relabeled) subsequences $u_{\varepsilon}$ and $\xi
_{\varepsilon}$ such that
\begin{align}
u_{\varepsilon} &  \rightarrow u\qquad\text{weakly in }H^{1}(0,T;H)\text{ and
in }L^{2}(0,T;X),\label{conv1}\\
u_{\varepsilon} &  \rightarrow u\qquad\text{strongly in }C([0,T];H)\text{ and
in }L^{2}(0,T;H),\label{conv2}\\
\xi_{\varepsilon} &  \rightarrow\xi\qquad\text{weakly in }L^{2}(0,T;H)\text{.
}\label{conv3}%
\end{align}
The demiclosedness of maximal monotone operators \cite{Br3} entails that
$\xi\in\partial\phi(u)$ a.e. in $(0,T)$. The trajectory $u$ is hence the
strong solution to (\ref{gradient flow2})-(\ref{gradient flow2.2}). This
concludes the proof of Theorem \ref{main teo}.

\paragraph{Explicit convergence rate.}

Under additional assumptions on $f$, we can obtain an estimate for the
convergence rate of $\left\Vert u_{\varepsilon}-u \right\Vert _{C([0,T];H)}$.
In particular, let $f_{i}:H\rightarrow H,$ $i=1,2,$ be such that
\begin{align}
&  f =f_{1}+f_{2},\nonumber\\
&  f_{1}\text{ is Lipschitz continuous and }\nonumber\\
&  -f_{2}\text{ is monotone.} \label{add ass f}%
\end{align}

By testing the difference between the regularized equation and the gradient
flow equation with $w=u-u_{\varepsilon}$ and using the convexity of $\phi$, we
get (cf. \cite{Mi-St})
\begin{align}
\frac{\varepsilon}{2}\int_{0}^{t}|w^{\prime}|^{2}+\frac{1}{4}|w(t)|^{2}  &
\leq|u_{0}-u_{0\varepsilon}|^{2}+\varepsilon\int_{0}^{t}|u^{\prime}%
|^{2}+\varepsilon^{2}|u_{\varepsilon}^{\prime}(t)|^{2}\nonumber\\
&  +\frac{\varepsilon^{2}}{2}|u_{\varepsilon}^{\prime}(0)|^{2}+\int_{0}%
^{t}(f(u)-f(u_{\varepsilon}),u-u_{\varepsilon})\nonumber\\
&  \leq C\varepsilon+\int_{0}^{t}(f(u)-f(u_{\varepsilon}),u-u_{\varepsilon
})\nonumber\\
&  \leq C\varepsilon+L\int_{0}^{t}|w|^{2}\text{,} \label{strong conv 1}%
\end{align}
where $L$ is the Lipschitz constant of $f_{1}$. Applying the Gronwall Lemma,
we obtain%
\begin{equation}
\frac{\varepsilon}{2}\int_{0}^{t}|w^{\prime}|^{2}+|w(t)|^{2}\leq
C\varepsilon\label{strong conv2}%
\end{equation}
which entails
\[
\left\Vert u_{\varepsilon}-u\right\Vert _{C([0,T];H)}\leq C\varepsilon^{1/2}.
\]

\section{Nonconvex energies\label{nonconvex energy}}

We now come to the proof of Theorem \ref{full gen th}, namely we consider
$\phi=\varphi_{1}-\varphi_{2}$ nonconvex. This forces us to introduce a
further approximation which will be then removed before taking the causal
limit $\varepsilon\rightarrow0$. In particular, we regularize the problem for
all $\lambda>0$ by replacing $\varphi_{2}$ with its \textit{Moreau-Yosida
regularization} \cite{Br3}:
\[
\varphi_{2}^{\lambda}(u)=\inf_{v\in H}\left(  \frac{1}{2\lambda}%
|u-v|^{2}+\varphi_{2}(v)\right)  =\frac{1}{2\lambda}|u-J_{\lambda}%
u|^{2}+\varphi_{2}(J_{\lambda}u)\qquad\text{for all }u\in H\text{,}%
\]
where $J_{\lambda}u$ denotes the \textit{resolvent} for $\partial\varphi_{2}$
It is well known that
\begin{align}
\varphi_{2}^{\lambda} &  \in C^{1,1}\text{,}\\
\varphi_{2}^{\lambda}(u) &  \leq\varphi_{2}(u)\qquad\text{for all }u\in
D\left(  \varphi_{2}\right)  \text{,}\label{comparison yosida}\\
\mathrm{{D}}\varphi_{2}^{\lambda}(u) &  =\partial\varphi_{2}(J_{\lambda
}u)\qquad\text{for all }u\in H\text{,}\label{resolvent}\\
|\mathrm{{D}}\varphi_{2}^{\lambda}(u)| &  \leq|\eta|\qquad\text{for all
}[u,\eta]\in\partial\varphi_{2}\text{.}\label{norm yosida}%
\end{align}
Here $\mathrm{{D}}\varphi_{2}^{\lambda}$ denotes the Fr\'{e}chet derivative of
$\varphi_{2}^{\lambda}$. In particular, $\mathrm{{D}}\varphi_{2}^{\lambda
}:H\rightarrow H$ is (single-valued and) Lipschitz continuous. Hence,
$g=f+\mathrm{{D}}\varphi_{2}^{\lambda}$ satisfies assumption
(\ref{linear growth f}). Thus, Theorem \ref{main teo} ensures the existence of
(at least) a solution $u_{\varepsilon,\lambda}$ to
\begin{align}
-\varepsilon u_{\varepsilon,\lambda}^{\prime\prime}+u_{\varepsilon,\lambda
}^{\prime}+\xi_{\varepsilon,\lambda} &  =f(u_{\varepsilon,\lambda
})+\mathrm{{D}}\varphi_{2}^{\lambda}(u_{\varepsilon,\lambda})\qquad\text{a.e.
in }(0,T),\label{Euler Yosida}\\
\xi_{\varepsilon,\lambda} &  \in\partial\varphi_{1}(u_{\varepsilon,\lambda
})\qquad\text{a.e. in }(0,T),\\
u_{\varepsilon,\lambda}(0) &  =u_{0},\qquad u_{\varepsilon,\lambda}^{\prime
}(T)=0.
\end{align}

We now derive estimates on $u_{\varepsilon,\lambda}$ which are uniform with
respect to $\lambda$ (as well as $\varepsilon$) in order to pass to the limit.
Henceforth the symbol $C$ will be independent of $\lambda$ as well. Testing
(\ref{Euler Yosida}) by $u_{\varepsilon,\lambda}^{\prime}$ and integrating
over $\left[  0,t\right]  $, we obtain
\[
-\varepsilon\int_{0}^{t}(u_{\varepsilon,\lambda}^{\prime\prime},u_{\varepsilon
,\lambda}^{\prime})+\int_{0}^{t}|u_{\varepsilon,\lambda}^{\prime}|^{2}%
+\varphi_{1}(u_{\varepsilon,\lambda}(t))-\varphi_{2}^{\lambda}(u_{\varepsilon
,\lambda}(t))=\int_{0}^{t}(f(u_{\varepsilon,\lambda}),u_{\varepsilon,\lambda
}^{\prime})+\varphi_{1}(u_{0})-\varphi_{2}^{\lambda}(u_{0}).
\]
Hence, using assumption (\ref{controll 1}) and inequality
(\ref{comparison yosida}), we get%
\begin{equation}
-\varepsilon\int_{0}^{t}(u_{\varepsilon,\lambda}^{\prime\prime},u_{\varepsilon
,\lambda}^{\prime})+\int_{0}^{t}|u_{\varepsilon,\lambda}^{\prime}|^{2}%
+(1{-}k_{1})\varphi_{1}(u_{\varepsilon,\lambda}(t))\leq\int_{0}^{t}%
(f(u_{\varepsilon,\lambda}),u_{\varepsilon,\lambda}^{\prime})+C\text{.}%
\label{tarzan}%
\end{equation}
Arguing as in Section \ref{causal limit}, we obtain
\begin{equation}
\left\Vert u_{\varepsilon,\lambda}\right\Vert _{H^{1}(0,T;H)}\leq
C\text{.}\label{quattro otto}%
\end{equation}
Integrating relation (\ref{tarzan}) over $[0,T]$ and using the above
estimates, we additionally find
\begin{equation}
\int_{0}^{T}\varphi_{1}(u_{\varepsilon,\lambda})\leq C\text{.}\label{est}%
\end{equation}
Thus, thanks to assumption (\ref{compact assumption}), we deduce%
\begin{align*}
\left\Vert u_{\varepsilon,\lambda}\right\Vert _{L^{2}\left(  0,T;X\right)  }
& \leq C\text{,}\\
\left\Vert u_{\varepsilon,\lambda}\right\Vert _{C\left(  [0,T];H\right)  }  &
\leq C\text{.}%
\end{align*}
Applying, the maximal regularity estimate (\ref{maxRegEst}), we have
\begin{align*}
& \varepsilon^{2}\left\Vert u_{\varepsilon,\lambda}^{\prime\prime}\right\Vert
_{L^{2}(0,T,H)}^{2}+\left\Vert u_{\varepsilon,\lambda}^{\prime}\right\Vert
_{L^{2}(0,T,H)}^{2}+\left\Vert \xi_{\varepsilon,\lambda}\right\Vert
_{L^{2}(0,T;H)}^{2}+\varphi_{1}(u_{\varepsilon,\lambda}(T))\\
& \leq\left\Vert f(u_{\varepsilon,\lambda})+\mathrm{{D}}\varphi_{2}^{\lambda
}(u_{\varepsilon,\lambda})\right\Vert _{L^{2}(0,T;H)}^{2}\text{.}%
\end{align*}
Thanks to (\ref{norm yosida}) and assumption (\ref{controll 2}), we estimate
\[
|f(u_{\varepsilon,\lambda})+\mathrm{{D}}\varphi_{2}^{\lambda}(u_{\varepsilon
,\lambda})|^{2}\leq(1+\delta)k_{2}|\xi_{\varepsilon,\lambda}|^{2}+C_{\delta
}|f(u_{\varepsilon,\lambda})|^{2}%
\]
for every $\delta>0$ and some $C_{\delta}$. Thus, as $k_{2}<1$, choosing
$\delta$ sufficiently small,
\begin{align}
\varepsilon^{2}\left\Vert u_{\varepsilon,\lambda}^{\prime\prime}\right\Vert
_{L^{2}(0,T,H)}^{2}+\left\Vert u_{\varepsilon,\lambda}^{\prime}\right\Vert
_{L^{2}(0,T,H)}^{2}+\left\Vert \xi_{\varepsilon,\lambda}\right\Vert
_{L^{2}(0,T;H)}^{2}+\varphi_{1}(u_{\varepsilon,\lambda}(T)) &  \leq C\text{,
}\label{es}\\
\varepsilon\left\Vert u_{\varepsilon,\lambda}^{\prime}\right\Vert _{C\left(
[0,T];H\right)  }^{2} &  \leq C\text{.}\label{es2}%
\end{align}
As a consequence of assumptions (\ref{controll 1}), (\ref{controll 2}) and of
the above estimates, we get
\[
\int_{0}^{T}\varphi_{2}(u_{\varepsilon,\lambda})+\int_{0}^{T}|\mathrm{{D}%
}\varphi_{2}^{\lambda}(u_{\varepsilon,\lambda})|^{2}\leq C\text{.}%
\]

We deal first with the passage to the limit for $\lambda\rightarrow0$ for
$\varepsilon$ fixed. Owing to the obtained uniform estimates, up to some not
relabeled subsequence, we have%
\begin{align*}
u_{\varepsilon,\lambda} &  \rightarrow u_{\varepsilon}\qquad\text{weakly in
}H^{2}(0,T;H)\text{ and in }L^{2}(0,T;X),\\
\xi_{\varepsilon,\lambda} &  \rightarrow\xi_{\varepsilon}\qquad\text{weakly in
}L^{2}(0,T;H)\text{,}\\
\mathrm{{D}}\varphi_{2}^{\lambda}(u_{\varepsilon,\lambda}) &  \rightarrow
\eta_{\varepsilon}\qquad\text{weakly in }L^{2}(0,T;H)\text{,}\\
u_{\varepsilon,\lambda} &  \rightarrow u_{\varepsilon}\qquad\text{strongly in
}C([0,T];H).
\end{align*}
Using continuity of $f$, we obtain
\[
f(u_{\varepsilon,\lambda})\rightarrow f(u_{\varepsilon})\qquad\text{strongly
in }L^{2}(0,T;H).
\]
As a consequence of the demiclosedness of maximal monotone operators we
conclude that $\xi_{\varepsilon}\in\partial\varphi_{1}(u_{\varepsilon})$
almost everywhere. The inclusion $\eta_{\varepsilon}\in\partial\varphi
_{2}(u_{\varepsilon})$ follows then by the standard monotonicity argument
\cite[Sec. 1.2]{Ba}. As $H=H^{\ast}$ is compactly embedded in $X^{\ast}$ we
have the following convergence result (again for a not-relabeled subsequence)%
\[
u_{\varepsilon,\lambda}^{\prime}\rightarrow u_{\varepsilon}^{\prime}%
\qquad\text{strongly in }C([0,T];X^{\ast}).
\]
In particular, $u_{\varepsilon}^{\prime}(T)=0$ and $u_{\varepsilon}$ solves
equation (\ref{euler eq 2}). In addition, the sequence $u_{\varepsilon}$
satisfies the estimates (\ref{quattro otto})-(\ref{es2}) and
\[
\int_{0}^{T}\varphi_{2}(u_{\varepsilon})+\int_{0}^{T}|\eta_{\varepsilon}%
|^{2}\leq C.
\]

Let us now consider the causal limit $\varepsilon\rightarrow0$. By taking (not
relabeled) subsequences one has
\begin{align*}
u_{\varepsilon} &  \rightarrow u\qquad\text{weakly in }H^{1}(0,T;H)\text{ and
in }L^{2}(0,T;X),\\
u_{\varepsilon} &  \rightarrow u\qquad\text{strongly in }C\left(
[0,T];H\right)  ,\\
\xi_{\varepsilon} &  \rightarrow\xi\qquad\text{weakly in }L^{2}(0,T;H)\text{,
}\\
\eta_{\varepsilon} &  \rightarrow\eta\qquad\text{weakly in }L^{2}%
(0,T;H)\text{,}\\
f(u_{\varepsilon}) &  \rightarrow f(u)\qquad\text{strongly in }L^{2}(0,T;H).
\end{align*}
By the demiclosedness of maximal monotone operators, one concludes $\xi
(t)\in\partial\varphi_{1}(u(t))$ and $\eta(t)\in\partial\varphi_{2}(u(t))$ for
almost every $t\in\left(  0,T\right)  $. Hence, $u$ solves equation
(\ref{non convex perturbed gf}) and the assertion of Theorem \ref{full gen th} follows.

\section{More general initial data \label{generalization}}

The results of Theorem \ref{full gen th} are also valid under weaker
assumptions on the initial datum $u_{0}$. Aiming at clarity, we first
illustrate the case of a convex energy. We use here the notation of Section
\ref{lambda convex energy} and follow closely the argument in \cite[Secs.
2.5-6]{Mi-St}. From \cite{Br1,Br2} we define the \textit{interpolation} set
$D_{r,p}$ as%
\[
D_{r,p}=\left\{  u\in\overline{D(\partial\phi)}:\varepsilon\longmapsto
\varepsilon^{-r}|u-J_{\varepsilon}u|\in L^{p}\left(  0,1,\varepsilon
^{-1}\mathrm{d}\varepsilon\right)  \right\}  \text{,}%
\]
where $J_{\varepsilon}=(id+\varepsilon\partial\phi)^{-1}$ is the standard
\textit{resolvent} operator. We recall the following properties from
\cite{Br1,Br2}%

\begin{align*}
&  u_{0}\in D_{r,p} \text{ iff }\exists\varepsilon\in\lbrack0,1]\longmapsto
v(\varepsilon):v\in W_{\mathrm{{loc}}}^{1,1}(0,1]\text{, continuous in }[0,1],
\quad v(0)=u_{0},\\
&  v(\varepsilon)\in D(\partial\phi)\text{ a.e. and }\varepsilon
^{1-r}(|(\partial\phi(v\left(  \varepsilon\right)  ))^{\circ}|+|v^{\prime
}\left(  \varepsilon\right)  |)\in L^{p}(0,1,\varepsilon^{-1}\mathrm{d}%
\varepsilon).\\
\\
&  D(\partial\phi)\subset D(\phi)=D_{1/2,2}\subset D_{1/2,\infty}\subset
D_{r,\infty}\text{ for } r\in(0,1/2).
\end{align*}

Let now $u_{0}\in D_{r,\infty}$ for $r\in(0,1/2]$ and the sequence
$u_{0\varepsilon}\in D(\partial\phi)$ be such that $u_{0\varepsilon
}\rightarrow u_{0}$ strongly in $H$ and
\[
\varepsilon^{-r}|u_{0\varepsilon}-u_{0}|+\varepsilon^{1-r}|(\partial
\phi(u_{0\varepsilon}))^{\circ}|\leq C.
\]
Arguing as in Section \ref{lambda convex energy} it is possible to prove
existence of a solution $u_{\varepsilon}$ to the regularized problem
\begin{align*}
-\varepsilon u_{\varepsilon}^{\prime\prime}+u_{\varepsilon}^{\prime}%
+\xi_{\varepsilon}-f(u_{\varepsilon})  &  =0\text{ \ \ \ \ a.e. in }(0,T),\\
\xi_{\varepsilon}  &  \in\partial\phi(u_{\varepsilon})\text{ \ \ \ \ a.e. in
}(0,T),\\
u_{\varepsilon}^{\prime}(T)  &  =0,\\
u_{\varepsilon}(0)  &  =u_{0\varepsilon}.
\end{align*}
Estimate (\ref{final max reg}) reads in this case
\[
\varepsilon^{2}\left\Vert u_{\varepsilon}^{\prime\prime}\right\Vert
_{L^{2}(0,T;H)}^{2}+\left\Vert u_{\varepsilon}^{\prime}\right\Vert
_{L^{2}(0,T;H)}^{2}+\left\Vert \xi_{\varepsilon}\right\Vert _{L^{2}%
(0,T;H)}^{2}+\phi(u_{\varepsilon}(T))\leq C+\left\Vert f(u_{\varepsilon
})\right\Vert ^{2}\leq C\varepsilon^{2r-1}\text{.}%
\]
If $u_{0}\in D(\phi)=D_{1/2,2}$ then $r=1/2$ and the estimate suffices to pass
to the limit. By assuming (\ref{add ass f}), we can argue as in
(\ref{strong conv 1})-(\ref{strong conv2}) and obtain%
\begin{align*}
\frac{\varepsilon}{2}\int_{0}^{t}|u^{\prime}-u_{\varepsilon}^{\prime}%
|^{2}+\frac{1}{4}|u(t)-u_{\varepsilon}(t)|^{2}  &  \leq C\left(
|u_{0}-u_{0\varepsilon}|^{2}+\varepsilon\int_{0}^{t}|u^{\prime}|^{2}%
+\varepsilon^{2}|u_{\varepsilon}^{\prime}(t)|^{2}+\frac{\varepsilon^{2}}%
{2}|u_{\varepsilon}^{\prime}(0)|^{2}\right) \\
&  \leq C\varepsilon^{2r}\text{.}%
\end{align*}
Thus, uniform convergence holds for all $r\in(0,1/2]$.

We deal now with the nonconvex energy case. Let $\varphi_{1},~\varphi_{2},~f$
satisfy assumptions of Theorem \ref{full gen th}, define%
\begin{align*}
D_{r,p}(\varphi_{1})  &  =\{u\in\overline{D(\partial\varphi_{1})}%
:\varepsilon\longmapsto\varepsilon^{-r}|u-J_{\varepsilon}u|\in L^{p}%
(0,1,\varepsilon^{-1}\mathrm{d}\varepsilon)\}\text{,}\\
J_{\varepsilon}  &  =(id+\varepsilon\partial\varphi_{1})^{-1}.
\end{align*}
Assume $u_{0} \in D_{r, \infty}(\varphi_{1})$ for $r \in(0,1/2]$ so that there
exists a sequence $u_{0\varepsilon}\in D(\partial\varphi_{1})\subset
D(\partial\varphi_{2})$ such that $u_{0\varepsilon} \to u_{0}$ strongly in
$H$,
\[
\varepsilon^{-r}|u_{0\varepsilon}-u_{0}|+\varepsilon^{1-r}|(\partial
\varphi_{1}(u_{0\varepsilon}))^{\circ}|\leq C,
\]
and $\varepsilon^{1-r}|\partial\varphi_{2}(u_{0\varepsilon})|\leq C$ (by using
assumption (\ref{controll 2})). This is enough to combine the uniform
estimates of Section \ref{nonconvex energy} with the approximation of the
initial datum $u_{0\varepsilon}\in D_{r,p}(\varphi_{1})$ and extend the
results of Theorem \ref{full gen th} to the case $u_{0} \in D_{r,\infty
}(\varphi_{1})$.

\section{Applications \label{applications}}

Our results yield a generalization to the nonpotential perturbation case of
the theory in \cite{Ak-St,Mi-St}. Our analysis applies to most of the examples
described in Section 6 of \cite{Ak-St} and Section 7 of \cite{Mi-St}, e.g.,
quasilinear parabolic PDEs, the Allen-Cahn equation, the sublinear heat
equation. Moreover, the occurrence of a nonpotential term allows us to apply
the abstract theory to \textit{systems}, in particular to reaction-diffusion
and nonlinear diffusion systems.

\subsection{Reaction-diffusion systems}

Consider the system%
\begin{align}
u_{t}  &  =D_{1}\Delta u+f_{1}(u,v) \qquad\text{in }\Omega\times
(0,T)\text{,}\label{reaction diffusion system}\\
v_{t}  &  =D_{2}\Delta v+f_{2}(u,v) \qquad\text{in }\Omega\times
(0,T)\text{,}\\
\partial_{n} u  &  =0 = \partial_{n}v \qquad\text{on }\partial\Omega
\times(0,T)\text{,} \label{reaction diffusion system2}%
\end{align}
where $\Omega$ is a bounded subset of $%
\mathbb{R}
^{d}$ with sufficiently smooth boundary $\partial\Omega$ and $n$ denotes the
unit outward normal vector on $\partial\Omega$ . Assume $D_{1},D_{2}>0$ and
\[
f%
\begin{pmatrix}
u\\
v
\end{pmatrix}
=%
\begin{pmatrix}
f_{1}(u,v)\\
f_{2}(u,v)
\end{pmatrix}
:%
\mathbb{R}
^{2}\rightarrow%
\mathbb{R}
^{2}%
\]
to be a linearly bounded continuous function. System
(\ref{reaction diffusion system})-(\ref{reaction diffusion system2}) arises in
a variety of different situations. The choice%
\begin{align}
f_{1}(u,v)  &  =Au\left(  1-\frac{u}{K}\right)  -\frac{Buv}{1+Eu}%
,\label{lotka volterra}\\
f_{2}(u,v)  &  =\frac{Cuv}{1+Eu}-Dv, \label{lokta volterra 2}%
\end{align}
where $K>0$ and $A,B,C,D,E\geq0$ models a \textit{diffusive prey-predator
system} (cf., e.g., \cite{Mu,Du1,Du2}). In this contest, $u$ represents the
number of preys and $v$ the number of predators, $K$ is the so-called
\textit{capacity} of the environment for the prey and $D_{1}>0$ and $D_{2}>0$
are the corresponding diffusion coefficients. Usually one is interested in
solutions $\left(  u,v\right)  $ such that $u\in\lbrack0,K]$ and $v>0$. This
implies that (\ref{lotka volterra})-(\ref{lokta volterra 2}) can be
equivalently rewritten as%
\begin{align}
f_{1}(u,v)  &  =AU\left(  1-\frac{U}{K}\right)  -\frac{BUV}{1+EU},\label{a}\\
f_{2}(u,v)  &  =\frac{CUV}{1+EU}-DV \label{b}%
\end{align}
where%
\[
U=\left(  \min\{u,K\}\right)  ^{+},\text{ }V=(v)^{+}.
\]

By choosing a different form of $f$, the system relates to \textit{pattern
formation} in animal coating (cf. \cite{Mu,Mu2}). The reaction term takes here
the form
\begin{align}
f_{1}(u,v)  &  =\alpha-u-h(u,v),\label{c}\\
f_{2}(u,v)  &  =\gamma(\beta-v)-h(u,v),\label{d}\\
h(u,v)  &  =\frac{\rho uv}{1+u+\delta u^{2}} \label{ultima}%
\end{align}
where $\alpha$, $\beta$, $\gamma$, $\delta$, and $\rho$ are positive
constants. As $u$ and $v$ represent concentrations and $u,v>0$, we can
conveniently rewrite (\ref{ultima}) as
\begin{align}
h(u,v)=\frac{\rho uv}{1+(u)^{+}+\delta u^{2}} . \label{new h}%
\end{align}

Yet another example of choice of $f$ of application interest is
\begin{align}
f_{1}(u,v)  &  =p-ug(v),\label{e}\\
f_{2}(u,v)  &  =k(ug(v)-v), \label{f}%
\end{align}
which is related to \textit{combustion}. Here $p$ and $k$ are positive
constants and $g(v)=\exp(v/(1+\delta v))$ with $\delta>0$. The latter choices
are known as Scott-Wang-Showalter model \cite{SWS}. Here $u$ denotes the
concentration of an intermediate chemical species and $v$ is the temperature.

Note that the reaction terms corresponding to any of the choices
(\ref{a})-(\ref{b}), (\ref{c})-(\ref{d}) together with (\ref{new h}), or
(\ref{e})-(\ref{f}) are continuous and satisfy assumption
(\ref{linear growth f}). We are hence in the position of applying our abstract
theory to all these systems.

\bigskip

At first we rewrite system (\ref{reaction diffusion system}%
)-(\ref{reaction diffusion system2}) as%
\begin{equation}%
\begin{pmatrix}
u_{t}\\
v_{t}%
\end{pmatrix}
+\partial\phi%
\begin{pmatrix}
u\\
v
\end{pmatrix}
\ni\tilde{f}%
\begin{pmatrix}
u\\
v
\end{pmatrix}
\qquad\text{in }(0,T), \label{gf reaction diffusion}%
\end{equation}
where
\begin{align*}
\phi%
\begin{pmatrix}
u\\
v
\end{pmatrix}
=\left\{
\begin{array}
[c]{cc}%
\displaystyle{ \frac{1}{2}\int_{\Omega}D_{1}|\nabla u|^{2}+D_{2}|\nabla
v|^{2}+|u|^{2}+|v|^{2} } & \text{if }u\in D\text{ and }v\in D,\\
+\infty & \text{else,}%
\end{array}
\right. \\
D=\{u\in H^{2}(\Omega): \partial_{n} u=0\text{ on }\partial\Omega\}, \qquad
H=L^{2}(\Omega), \qquad X=H^{1}(\Omega),\\
\tilde{f}%
\begin{pmatrix}
u\\
v
\end{pmatrix}
=f%
\begin{pmatrix}
u\\
v
\end{pmatrix}
+%
\begin{pmatrix}
u\\
v
\end{pmatrix}
.
\end{align*}
It is straightforward to check that $\phi$ and $\tilde{f}$ satisfy the
assumptions of Theorem \ref{full gen th}. We hence have the following.

\begin{theorem}
Let $u_{0},v_{0}\in D$. Then, for every $T>0$ and for $\varepsilon
=\varepsilon(T)>0$ sufficiently small, the system%

\begin{align*}
-\varepsilon u_{tt}+u_{t} &  =D_{1}\Delta u+f_{1}(u,v)\qquad\text{ in }%
\Omega\times(0,T)\text{,}\\
-\varepsilon v_{tt}+v_{t} &  =D_{2}\Delta v+f_{2}(u,v)\qquad\text{ in }%
\Omega\times(0,T)\text{,}\\
\partial_{n}u &  =\partial_{n}v=0\qquad\ \text{on }\partial\Omega
\times(0,T)\text{,}\\
u(0) &  =u_{0},\quad v(0)=v_{0}\qquad\text{ in }\Omega,\\
\varepsilon u^{\prime}(T) &  =0\text{, }\varepsilon v^{\prime}(T)=0
\end{align*}

admits at least a solution $\left(  u_{\varepsilon},v_{\varepsilon}\right)
\in H^{2}(0,T;(L^{2}(\Omega))^{2}) \cap L^{2}(0,T;(H^{1}(\Omega))^{2}). $
Moreover, $u_{\varepsilon}\rightarrow u$ and $v_{\varepsilon}\rightarrow v$
weakly in $H^{1}\left(  0,T;L^{2}(\Omega)\right)  $ and strongly in $C\left(
[0,T];L^{2}(\Omega)\right)  $ where $\left(  u,v\right)  $ is a solution to
system \emph{(\ref{reaction diffusion system}%
)-(\ref{reaction diffusion system2})}.
\end{theorem}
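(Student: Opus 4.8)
The plan is to recognize system (\ref{gf reaction diffusion}) as a special case of the abstract problem (\ref{nonconv gf-1})--(\ref{nonconv gf}) in the purely convex regime $\varphi_2=0$, and then to invoke Theorem \ref{full gen th} (whose convex subcase is Theorem \ref{main teo}) essentially verbatim. I would work in the Hilbert space $H=(L^2(\Omega))^2$ with the compactly embedded Banach space $X=(H^1(\Omega))^2$, set $\varphi_1=\phi$ and $\varphi_2=0$, and take the nonpotential perturbation $\tilde f=f+\mathrm{id}$. A point worth stressing at the outset is the role of the extra quadratic term $\tfrac12\int_\Omega|u|^2+|v|^2$ inside $\phi$, balanced by the extra identity in $\tilde f$: the pure Dirichlet energy controls only the gradient seminorm and, under Neumann boundary conditions, fails (\ref{compact assumption}) because the constants lie in its kernel; adding the $L^2$ term restores full $H^1$-coercivity, while the identity shift cancels out in the Euler equation, so that (\ref{gf reaction diffusion}) reproduces (\ref{reaction diffusion system})--(\ref{reaction diffusion system2}) exactly.

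Next I would verify the hypotheses of Section \ref{assumptions} one at a time. The functional $\phi$ is a nonnegative quadratic form, hence proper, bounded below, convex, and weakly lower semicontinuous on $(L^2(\Omega))^2$; its subdifferential is the shifted Neumann Laplacian $\partial\phi(u,v)=(-D_1\Delta u+u,\,-D_2\Delta v+v)$, and $H^2$-elliptic regularity on the smooth domain $\Omega$ identifies the domain $D(\partial\phi)=D\times D$. Assumption (\ref{compact assumption}) holds with $c_X=\tfrac12\min\{D_1,D_2,1\}>0$ and $C_3=0$, since $H^1(\Omega)\hookrightarrow\hookrightarrow L^2(\Omega)$ by Rellich--Kondrachov. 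Continuity and the sublinear bound (\ref{linear growth f}) for $\tilde f$ follow from the corresponding properties of $f$ established in this section: for each admissible reaction pair---the truncated Lotka--Volterra nonlinearities (\ref{a})--(\ref{b}), the pattern-formation pair (\ref{c})--(\ref{d}) together with (\ref{new h}), or the Scott--Wang--Showalter pair (\ref{e})--(\ref{f})---the rewriting through the cut-offs $U,V$ (respectively the boundedness of $g$) keeps the numerators at most linear and the denominators bounded below, so that (\ref{linear growth f}) is met, and adding $\mathrm{id}$ preserves this.

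With all hypotheses in force and $u_0,v_0\in D=D(\partial\phi)$, Theorem \ref{full gen th} applies directly. For $\varepsilon$ small it yields a solution of the elliptic-in-time regularization (\ref{euler eq 2})--(\ref{euler eq 2 ic}), whose regularity $H^2(0,T;(L^2(\Omega))^2)\cap L^2(0,T;(H^1(\Omega))^2)$ is exactly the maximal-regularity estimate (\ref{maxRegEst})/(\ref{final max reg}) read in the present spaces. Spelling out $\partial\phi$ and cancelling the identity terms turns this into the stated regularized PDE system, with $\varepsilon u'(T)=\varepsilon v'(T)=0$. Passing to the causal limit $\varepsilon\to0$ as in the proof of Theorem \ref{main teo} gives, up to a subsequence, $(u_\varepsilon,v_\varepsilon)\to(u,v)$ weakly in $H^1(0,T;(L^2(\Omega))^2)$ and strongly in $C([0,T];(L^2(\Omega))^2)$, with $(u,v)$ a strong solution of (\ref{gf reaction diffusion}), that is, of (\ref{reaction diffusion system})--(\ref{reaction diffusion system2}); the componentwise convergences in the statement are then read off from the convergence in the product space.

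Since the whole argument reduces to checking the abstract hypotheses, there is no new hard estimate to perform. The only genuinely delicate points are the identification of $D(\partial\phi)$ with $D\times D$, which rests on $H^2$-elliptic regularity for the Neumann problem and hence on the smoothness of $\partial\Omega$, and the verification that each concrete nonlinearity, once rewritten through its truncations, truly obeys the linear bound (\ref{linear growth f}); both are routine, but they are precisely where the modelling assumptions on $\Omega$ and on $f$ enter.
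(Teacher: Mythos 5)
Your proposal is correct and follows exactly the paper's route: the paper rewrites the system as the gradient flow (\ref{gf reaction diffusion}) with the quadratically augmented energy $\phi$ and shifted perturbation $\tilde f=f+\mathrm{id}$, declares the verification of the hypotheses of Theorem \ref{full gen th} ``straightforward,'' and applies that theorem (in its convex case $\varphi_2=0$). Your write-up simply makes explicit the checks the paper omits --- the $H^1$-coercivity rescued by the $L^2$ term, the identification $D(\partial\phi)=D\times D$ via Neumann elliptic regularity, and the sublinearity of $\tilde f$ --- all of which are accurate.
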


\subsection{Nonlinear diffusion}

We can also apply our abstract results to systems of nonlinear
reaction-diffusion equations of the following type%
\begin{align}
u_{t} &  =D_{1}\Delta_{p}u+|u|^{m-2}u-|u|^{q-2}u+f_{1}(u,v)\qquad\text{ in
}\Omega\times(0,T)\text{,}\label{nonlin ex}\\
v_{t} &  =D_{2}\Delta_{p}v+|v|^{m-2}v-|v|^{q-2}v+f_{2}(u,v)\qquad\text{ in
}\Omega\times(0,T)\text{,}\\
\partial_{n}u &  =\partial_{n}v=0\qquad\text{ on }\partial\Omega
\times(0,T)\text{,}\label{nonlin ex2}%
\end{align}
where $1<q<m<+\infty$, $1<p<+\infty$, and $\Delta_{p}$ is the so-called
$p$\textit{-Laplacian} given by%
\[
\Delta_{p}u=\nabla\cdot(|\nabla u|^{p-2}\nabla u).
\]
In order to write system (\ref{nonlin ex})-(\ref{nonlin ex2}) to the abstract
setting, we define $H=L^{2}(\Omega)$, $X=D(\varphi_{1})=W^{1,p}(\Omega)\cap
L^{m}(\Omega)$, $D(\varphi_{2})=L^{q}(\Omega)$,
\[
\varphi_{1}%
\begin{pmatrix}
u\\
v
\end{pmatrix}
=\left\{
\begin{array}
[c]{cc}%
\displaystyle{\int_{\Omega}\frac{D_{1}}{p}|\nabla u|^{p}+\frac{D_{2}}%
{p}|\nabla v|^{p}+\frac{1}{m}|u|^{m}+\frac{1}{m}|v|^{m}} & \text{if }u\in
D(\varphi_{1})\text{ and }v\in D(\varphi_{1}),\\
+\infty & \text{else},
\end{array}
\right.
\]
and
\[
\varphi_{2}%
\begin{pmatrix}
u\\
v
\end{pmatrix}
=\left\{
\begin{array}
[c]{cc}%
\displaystyle{\frac{1}{q}\int_{\Omega}|u|^{q}+|v|^{q}} & \text{if }u\in
D(\varphi_{2})\text{ and }v\in D(\varphi_{2}),\\
+\infty & \text{else.}%
\end{array}
\right.
\]
Moreover, we assume
\[
f%
\begin{pmatrix}
u\\
v
\end{pmatrix}
=%
\begin{pmatrix}
f_{1}(u,v)\\
f_{2}(u,v)
\end{pmatrix}
:%
\mathbb{R}
^{2}\rightarrow%
\mathbb{R}
^{2}%
\]
to be linearly bounded and continuous. It can be easily checked that
assumptions of Theorem \ref{full gen th} are satisfied (cf. Section 6.1 of
\cite{Ak-St}) and we hence conclude the following.

\begin{theorem}
Let $u_{0},v_{0}\in D(\partial\varphi_{1})$. Then, for every $T>0$ and for
$\varepsilon=\varepsilon(T)>0$ sufficiently small, the system
\begin{align*}
-\varepsilon u_{tt}+u_{t} &  =D_{1}\Delta_{p}u+|u|^{m-2}u-|u|^{q-2}%
u+f_{1}(u,v)\text{ \ \ \ in }\Omega\times(0,T)\text{,}\\
-\varepsilon v_{tt}+v_{t} &  =D_{2}\Delta_{p}v+|v|^{m-2}v-|v|^{q-2}%
v+f_{2}(u,v)\text{ \ \ \ in }\Omega\times(0,T)\text{,}\\
\partial_{n}u &  =\partial_{n}v=0\text{ \ \ \ on }\partial\Omega
\times(0,T)\text{,}\\
u(0) &  =u_{0}\text{, \ \ }v(0)=v_{0}\text{\ \ \ in }\Omega\\
\varepsilon u^{\prime}(T) &  =0\text{, \ \ \ }\varepsilon v^{\prime}(T)=0
\end{align*}
admits at least a solution
\[
\left(  u_{\varepsilon},v_{\varepsilon}\right)  \in H^{2}(0,T;(L^{2}%
(\Omega))^{2})\cap L^{p}(0,T;(W^{1,p}(\Omega))^{2})\cap L^{m}(0,T;(L^{m}%
(\Omega))^{2}).
\]
Moreover, $u_{\varepsilon}\rightarrow u$ and $v_{\varepsilon}\rightarrow v$
weakly in $H^{1}\left(  0,T;L^{2}(\Omega)\right)  $ and strongly in $C\left(
[0,T];L^{2}(\Omega)\right)  $ where $\left(  u,v\right)  $ is a solution to
system \emph{(\ref{nonlin ex})-(\ref{nonlin ex2})}.
\end{theorem}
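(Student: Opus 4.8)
The plan is to prove the statement purely as an application of the abstract Theorem \ref{full gen th}: I would exhibit the concrete data and then verify, one by one, that they meet every structural hypothesis of Section \ref{assumptions}. Concretely I would work on the product Hilbert space $H=(L^2(\Omega))^2$ with compactly embedded Banach space $X=(W^{1,p}(\Omega)\cap L^m(\Omega))^2$, take $\varphi_1,\varphi_2$ to be the integral functionals displayed above (extended by $+\infty$ off their natural domains), and let $f$ act as the Nemytskii operator induced by the continuous, linearly bounded symbol $\mathbb{R}^2\to\mathbb{R}^2$. Since $\varphi_1,\varphi_2$ and their subdifferentials decouple across the two components, the only coupling between the $u$- and $v$-equations sits in the nonpotential term $f$, which is exactly why the abstract framework is needed. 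Once all hypotheses are checked, system (\ref{nonlin ex})--(\ref{nonlin ex2}) is nothing but the abstract inclusion $u'+\partial\varphi_1(u)-\partial\varphi_2(u)\ni f(u)$ of the form (\ref{nonconv gf-1})--(\ref{nonconv gf}), and the asserted existence of the elliptic-in-time regularized solution together with its convergence as $\varepsilon\to0$ is read off directly from Theorem \ref{full gen th}.

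The routine verifications I would dispatch first. Both $\varphi_1$ and $\varphi_2$ are sums of integrals of convex integrands, hence convex, proper, nonnegative, and lower semicontinuous on $(L^2(\Omega))^2$ (the $p$-Dirichlet, $L^m$, and $L^q$ terms are each l.s.c.\ with respect to $L^2$-convergence). Since $\Omega$ is bounded and $q<m$, the embedding $L^m(\Omega)\hookrightarrow L^q(\Omega)$ gives $D(\varphi_1)\subset D(\varphi_2)$, and Young's inequality $\tfrac1q|s|^q\le \tfrac{k_1}{m}|s|^m+C$, valid for every $k_1>0$ precisely because $q<m$, integrates to the control (\ref{controll 1}) with any prescribed $k_1\in[0,1)$. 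The compactness hypothesis (\ref{compact assumption}) follows from the coercivity $\varphi_1(u)\ge c\,|u|_X^{\min\{p,m\}}-C$ together with the Rellich--Kondrachov embedding $W^{1,p}(\Omega)\cap L^m(\Omega)\hookrightarrow\hookrightarrow L^2(\Omega)$ (the intersection with $L^m$ supplies the needed compactness when $p$ is small); here one invokes the Akagi--Stefanelli version in which the exponent $2$ is replaced by $\min\{p,m\}$, cf.\ the remark after Theorem \ref{full gen th}. Continuity and linear growth of $f$ on $(L^2(\Omega))^2$ in the sense of (\ref{linear growth f}) are the standard continuity and boundedness of the superposition operator generated by a continuous, sublinear symbol.

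The crux, and the only step requiring genuine work, is assumption (\ref{controll 2}): the quantitative domination of $\partial\varphi_2$ by $\partial\varphi_1$ with constant $k_2<1$. After identifying (componentwise, under homogeneous Neumann conditions) $\partial\varphi_2(u)=|u|^{q-2}u$ and the minimal section $(\partial\varphi_1(u))^\circ=-D_1\Delta_p u+|u|^{m-2}u$, I would bound $\int_\Omega|u|^{2(q-1)}$ by $k_2\int_\Omega\bigl|{-}D_1\Delta_p u+|u|^{m-2}u\bigr|^2$ plus a term of the form $\ell(\|u\|_{L^2})(\varphi_1(u)+1)$. Expanding the square on the right, the cross term $-2D_1\int_\Omega \Delta_p u\,|u|^{m-2}u=2D_1(m-1)\int_\Omega|\nabla u|^{p-2}|u|^{m-2}|\nabla u|^2\ge0$ is nonnegative after integration by parts, so the right-hand side controls $\int_\Omega|u|^{2(m-1)}$; and since $q<m$ we have $2(q-1)<2(m-1)$, whence $\int_\Omega|u|^{2(q-1)}\le k_2\int_\Omega|u|^{2(m-1)}+C$ on the bounded domain, the additive constant being absorbed into $\ell\,(\varphi_1+1)$. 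The delicate points here are the rigorous identification of the subdifferentials and the attendant regularity ensuring $D(\partial\varphi_1)\subset D(\partial\varphi_2)$, which are exactly the elliptic estimates carried out in \cite[Sec.~6.1]{Ak-St}; I would appeal to them rather than reproduce them. With (\ref{controll 2}) established, all hypotheses of Theorem \ref{full gen th} hold, and the theorem yields both solvability of the regularized system in $H^2(0,T;(L^2(\Omega))^2)$ and the convergence to a solution of (\ref{nonlin ex})--(\ref{nonlin ex2}); the spatial regularity $L^p(0,T;(W^{1,p})^2)\cap L^m(0,T;(L^m)^2)$ comes from the uniform bound $\int_0^T\varphi_1(u_\varepsilon)\le C$, completing the proof.
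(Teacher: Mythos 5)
Your overall strategy coincides with the paper's proof: set $H=(L^2(\Omega))^2$, take the displayed integral functionals $\varphi_1,\varphi_2$ and the Nemytskii operator induced by $f$, verify the hypotheses of Section \ref{assumptions}, and read the conclusion off Theorem \ref{full gen th} (the paper compresses the verification into the citation of \cite[Sec.\ 6.1]{Ak-St}). Your checks of (\ref{controll 1}) via Young's inequality, of (\ref{controll 2}) via the nonnegativity of the cross term $-2D_1\int_\Omega \Delta_p u\,|u|^{m-2}u\,\mathrm{d}x\ge 0$ after integration by parts under the Neumann condition, of $D(\varphi_1)\subset D(\varphi_2)$, $D(\partial\varphi_1)\subset D(\partial\varphi_2)$, and of the continuity and linear growth of the superposition operator are correct, and they are exactly the substance of that citation.

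The gap is in your treatment of (\ref{compact assumption}). You correctly observe that the natural coercivity of $\varphi_1$ over $X=(W^{1,p}(\Omega)\cap L^m(\Omega))^2$ has exponent $\min\{p,m\}$ rather than $2$, but your fix --- invoking ``the Akagi--Stefanelli version in which the exponent $2$ is replaced by $\min\{p,m\}$, cf.\ the remark after Theorem \ref{full gen th}'' --- misreads that remark and appeals to a theorem that does not exist. The remark states the opposite of what you need: the weakened exponent is available only in \cite{Ak-St}, i.e.\ for the unperturbed flow $f=0$, whereas Theorem \ref{full gen th} --- the only result applicable here, since the coupling terms $f_1(u,v),f_2(u,v)$ are precisely the nonpotential part --- requires exponent $2$, which the paper declares necessary for the Gronwall Lemma \ref{gronwall} underlying its fixed-point estimates. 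Nor is the requirement cosmetic: if $m<2$, hypothesis (\ref{compact assumption}) genuinely fails (test it on constants $u\equiv c$, which lie in $D(\varphi_1)$ under Neumann conditions: $\varphi_1\sim c^m$ while $c_X|u|_X^2\sim c^2$), and if $p<2$ it fails in the natural norm on highly oscillatory functions of small amplitude. As written, your argument therefore only covers $p,m\ge2$, where $t^p\ge t^2-1$ and $t^m\ge t^2-1$ yield (\ref{compact assumption}) and $W^{1,p}(\Omega)\hookrightarrow\hookrightarrow L^2(\Omega)$ gives the compactness. To be fair, the paper's own one-sentence proof glosses over the same point, so you have put your finger on a real subtlety; but the correct resolution is either to restrict the exponents or to construct a different admissible space $X$ (e.g.\ by Gagliardo--Nirenberg interpolation when $m>2$), not to cite the $f=0$ theorem of \cite{Ak-St}, which cannot accommodate the coupling.
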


\section{Appendix}

We collect here two tools for the Reader's convenience.

\begin{lemma}
[Gronwall lemma]\label{gronwall}Let $\alpha,u\in L^{1}(0,T)$ and $B>0$.
Assume
\begin{equation}
u(t)\leq\alpha(t)+\int_{0}^{t}Bu(s)\mathrm{d}s~ \qquad\text{for a.e. } t
\in(0,T). \label{quasi last}%
\end{equation}
Then,
\begin{equation}
u(t)\leq\alpha(t)+\int_{0}^{t}B\alpha(s)\exp(B(t-s))\mathrm{d}s. \label{gw}%
\end{equation}

\end{lemma}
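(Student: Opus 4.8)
The plan is to reduce the integral inequality to a linear differential inequality for the cumulative integral and then resolve it with an integrating factor; this is the classical route for the integral form of Gronwall's inequality. First I would introduce the auxiliary function
\[
v(t):=\int_{0}^{t}B\,u(s)\,\mathrm{d}s .
\]
Since $u\in L^{1}(0,T)$ and $B>0$, the function $v$ is absolutely continuous on $[0,T]$, satisfies $v(0)=0$, and its a.e.\ derivative is $v^{\prime}(t)=B\,u(t)$. Inserting hypothesis \eqref{quasi last} (which holds for a.e.\ $t$) into this identity yields the differential inequality $v^{\prime}(t)\le B\alpha(t)+B\,v(t)$ for a.e.\ $t\in(0,T)$.

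Next I would multiply through by the integrating factor $\exp(-Bt)$. The map $t\mapsto v(t)\exp(-Bt)$ is again absolutely continuous (a product of an absolutely continuous and a smooth function), so its a.e.\ derivative equals $(v^{\prime}(t)-B\,v(t))\exp(-Bt)$, which by the differential inequality is bounded above by $B\alpha(t)\exp(-Bt)$. Integrating from $0$ to $t$, invoking the fundamental theorem of calculus for absolutely continuous functions and $v(0)=0$, I obtain $v(t)\exp(-Bt)\le\int_{0}^{t}B\alpha(s)\exp(-Bs)\,\mathrm{d}s$, hence $v(t)\le\int_{0}^{t}B\alpha(s)\exp(B(t-s))\,\mathrm{d}s$. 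Substituting $u(t)\le\alpha(t)+v(t)$ (which is precisely \eqref{quasi last} rewritten) then delivers \eqref{gw} exactly.

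The only point requiring genuine care, rather than routine calculation, is the measure-theoretic bookkeeping: because $\alpha$ and $u$ are merely $L^{1}$ and not continuous, I cannot work with pointwise derivatives everywhere. The device that makes everything rigorous is to transfer all regularity onto $v$, which is automatically absolutely continuous as the indefinite integral of an $L^{1}$ function, so that the integrating-factor manipulation is justified through the fundamental theorem of calculus for absolutely continuous functions together with the chain rule for a.e.\ derivatives. As an alternative I could iterate \eqref{quasi last} into itself, generating the Neumann-type series whose $k$-th term carries the kernel $B^{k}(t-s)^{k-1}/(k-1)!$ and sums to $\exp(B(t-s))$; there the subtlety shifts to controlling the $n$-fold simplex remainder $\int B^{n}u$, whose domain has volume $\sim t^{n}/n!\to0$. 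The integrating-factor argument avoids this convergence estimate, so I would present that one.
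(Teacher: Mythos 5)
Your proof is correct and follows essentially the same route as the paper: both arguments hinge on the absolutely continuous auxiliary function built from $\int_{0}^{t}Bu(s)\,\mathrm{d}s$, the integrating factor $\exp(-Bt)$, and a final substitution back into \eqref{quasi last}. The only cosmetic difference is that the paper bakes the factor $\exp(-Bt)$ directly into the definition of $v$, whereas you introduce it in a second step.
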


\begin{proof}
Define $v(t)= \exp(-Bt)\int_{0}^{t}Bu(s)\mathrm{d}s$. Then, $v\in
W^{1,1}(0,T),$ $v(0)=0$ and
\[
v^{\prime}(t)=B \exp(-Bt)\left(  u(t)-\int_{0}^{t}Bu(s)\mathrm{d}s\right)
\leq B \exp(-Bt)\alpha(t) \qquad\text{for }a.a.~t\in(0,T).
\]
Thus, by integrating over $(0,t)$ we get
\[
\exp(-Bt)\int_{0}^{t}Bu(s)\mathrm{d}s=v(t)\leq\int_{0}^{t}B \exp
(-Bs)\alpha(s)\mathrm{d}s
\]
yielding
\begin{equation}
\int_{0}^{t}Bu(s)\mathrm{d}s\leq\int_{0}^{t}B \exp(B(t-s))\alpha
(s)\mathrm{d}s\text{.} \label{last}%
\end{equation}
By substituting (\ref{last}) into (\ref{quasi last}) we get (\ref{gw}).
\end{proof}

\begin{theorem}
[{Schaefer fixed-point Theorem \cite[Thm. 4, Ch. 9]{Ev}}]\label{schaefer}Let
$X$ be a Banach space, $S:X\rightarrow X$ be continuous and compact, and
\[%
{\displaystyle\bigcup\limits_{\alpha\in\lbrack0,1]}}
\{u\in X:u=\alpha S(u)\}
\]
be bounded. Then, $S$ has a fixed point.
\end{theorem}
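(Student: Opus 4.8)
The plan is to reduce Schaefer's theorem to the Schauder fixed-point theorem by truncating $S$ with a radial retraction onto a sufficiently large closed ball, and then to exclude the ``escaping'' alternative by invoking the boundedness of the set $A:=\bigcup_{\alpha\in[0,1]}\{u\in X:u=\alpha S(u)\}$. Since $A$ is bounded (and nonempty, as $0\in A$ via the choice $\alpha=0$), I first fix a radius $M>0$ for which $\|u\|<M$ holds for \emph{every} $u\in A$; taking $M:=1+\sup_{u\in A}\|u\|$ will do. Let $\bar B:=\{u\in X:\|u\|\le M\}$, which is closed, bounded, and convex, and introduce the radial retraction $r:X\to\bar B$ defined by $r(u)=u$ when $\|u\|\le M$ and $r(u)=Mu/\|u\|$ when $\|u\|>M$; this map is continuous and sends $X$ onto $\bar B$.

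Next I would apply Schauder to the truncated map $\tilde S:=r\circ S:\bar B\to\bar B$. It is continuous, being a composition of continuous maps, and compact: because $S$ carries bounded sets into relatively compact ones and $r$ is continuous, the image $\tilde S(\bar B)$ is relatively compact. The Schauder fixed-point theorem therefore yields some $u^*\in\bar B$ with $u^*=r(S(u^*))$.

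It then remains to upgrade $u^*$ to a fixed point of $S$ itself, which I do by a dichotomy on $\|S(u^*)\|$. If $\|S(u^*)\|\le M$, the retraction acts as the identity on $S(u^*)$, so $u^*=r(S(u^*))=S(u^*)$ and we are finished. If instead $\|S(u^*)\|>M$, then $u^*=r(S(u^*))=\alpha S(u^*)$ with $\alpha:=M/\|S(u^*)\|\in(0,1)$, so by definition $u^*\in A$ and hence $\|u^*\|<M$; but simultaneously $\|u^*\|=\|r(S(u^*))\|=M$, because the retraction maps any point of norm exceeding $M$ onto the sphere of radius $M$. The two conclusions $\|u^*\|<M$ and $\|u^*\|=M$ are contradictory, so this second case cannot occur. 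The only genuinely delicate point, and the one I would be most careful about, is choosing $M$ so that $A$ sits \emph{strictly} inside $\bar B$: the entire contradiction hinges on the strict inequality $\|u^*\|<M$ for elements of $A$, which is precisely why $A$ being merely bounded is exactly the right hypothesis.
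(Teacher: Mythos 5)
Your proof is correct: the reduction to Schauder's theorem via the radial retraction onto a large ball, followed by the exclusion of the case $\|S(u^*)\|>M$ using the strict bound $\|u\|<M$ on the set $A$, is exactly the standard argument — indeed it is the proof given in the cited reference \cite[Thm.\ 4, Ch.\ 9]{Ev}, which the paper itself invokes without reproducing (the theorem appears in the Appendix with only the citation). Your care in choosing $M:=1+\sup_{u\in A}\|u\|$ so that $A$ lies strictly inside the ball is precisely the point on which the contradiction turns, so nothing is missing.
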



\end{document}